\newtheorem{theorem}{Theorem}[section]
\newtheorem{proposition}[theorem]{Proposition}
\newtheorem{remark}{Remark}
\title[Group of isometries]{Group of isometries of Hilbert ball equipped with the Carath$\mathbf{\acute{E}}$odory metric}
\author{Mukund Madhav Mishra and Rachna Aggarwal }
\address{Department of Mathematics, Hansraj College, University of Delhi, Delhi, India}
\email{mukund.math@gmail.com}
\address{Department of Mathematics, University of Delhi, Delhi, India}
\email{rachna2389@gmail.com}
\date{May 2021}
\begin{document}

\begin{abstract}
In this article, we study the geometry of an infinite dimensional Hyperbolic space. We will consider the group of isometries of the Hilbert ball equipped with the Carath$\acute{e}$odory metric and learn about some special subclasses of this group. We will also find some unitary equivalence condition and compute some cardinalities.
\end{abstract}
\keywords{Hyperbolic space ; isometry group ; Carath\'eodory metric ; dynamical types}
\subjclass[2020]{51M10; 51F25}
\maketitle
\section{Introduction}
Groups of isometries of finite dimensional hyperbolic spaces have been studied by a number of mathematicians; to name a few,  Anderson \cite{JA}, Chen and Greenberg   \cite{CG}, and Parker  \cite{JP}. Hyperbolic spaces can largely be classified into four classes: real, complex, quaternionic hyperbolic spaces and octonionic hyperbolic plane. The respective groups of isometries are $PO(n,1),\,PU(n,1),$\\
$PSp(n,1)$ and $F_{4(-20)}$. Real and complex cases are standard and have been discussed at various places. For example Anderson \cite{JA} and Parker \cite{JP}.  Quaternionic spaces have been studied by Cao and Parker \cite{CP} and Kim and Parker \cite{KP}. For octonionic hyperbolic plane, one may refer to  Baez \cite{BA} and Markham and Parker \cite{MP}. The basic hyperbolic plane model is half plane model usually given by $H^2$ and it is well connected with the famous Poincar$\acute{e}$ disc model in two dimensions \cite{JS}.  The distance in the Poincar$\acute{e}$ disc model is given by the Poincar$\acute{e}$ metric and one of the crucial properties of this metric is that the holomorphic self maps on the Poincar$\acute{e}$ disc satisfy Schwarz-Pick lemma, see \cite[Sec.2.3.5, Theorem 2.3.22]{SK}. Now in an attempt to generalize this lemma to higher dimensions, Carath$\acute{e}$odory and Kobayashi metrics were discovered which formed one of the ways to discuss hyperbolic structure on domains in $\mathbb{C}^n$. It is therefore natural to look for an infinite dimensional counterpart of the finite dimensional hyperbolic spaces. In late 20th century, Franzoni and Vesentini studied infinite dimensional Hilbert ball equipped with the Carath\'eodory metric. A description of the group of isometries of the hyperbolic ball  has been given in \cite{FV}. Motivational sources behind studies carried out in this article are \cite{SA},  \cite{KG} and \cite{KU}.  

In this article, we intend to study dynamical aspects of the group of isometries of infinite dimensional hyperbolic ball. To this end, we consider this group, focus on some special subclasses of this group  and explore their properties.  Sectional detail is as follows. Section 1 is a brief literature survey. Section 2 gives the description of group of holomorphic isometries and its linear representation. In section 3, we will study a class of isometries having a two dimensional reducing subspace, class of normal isometries, self adjoint isometries and involutory isometries. In section 4, we will discuss the condition on an isometry to be unitarily equivalent to its inverse and compute cardinality of the group of isometries, class of self adjoint elements and set of unitary equivalence classes of normal operators on $H \oplus \mathbb{C}$.

\section{The group of isometries}
We begin with the notion of holomorphicity in infinite dimensional set up, give a brief introduction of the Carath$\acute{e}$odory metric and describe the form of corresponding isometries on the infinite dimensional hyperbolic space which has been taken  from \cite{FV}.

 We denote the space of bounded linear operators from a Banach space $V$ to a Banach space $W$ by $B(V,W)$ and space of bounded linear operators on a Hilbert space $H$ by $B(H)$. Let $V$ and $W$ be two complex Banach spaces.  Let $U$ be an open subset of $V$. A mapping $f: U \longrightarrow W$ is called a  holomorphic function if it is Fr$\acute{e}$chet differentiable on $U$, i.e.  for every $a \in U$, there exists $A \in B(V, W)$ such that $\lim\limits_{x \rightarrow a}\dfrac{f(x)-f(a)-A(x-a)}{\|x-a\|}=0$ \cite[Page 68, Definition 147]{HJ}. Let $H$ be an infinite dimensional complex Hilbert space and B be the open unit ball in $H$ equipped with the Carath$\acute{e}$odory metric defined as follows.  Let $D$ be a domain in a complex normed space $E$ and $\Delta$ denote the open unit ball in $\mathbb{C}$ equipped with the Poincar$\acute{e}$ metric $\rho$. Let Hol$(D,\Delta)$ denote the set of all holomorphic mappings $f:D\rightarrow \Delta$. Then
\[C_D(x,y)= \sup_{f}{\rho(f(x),f(y))}\,\,\,\,\,\,\text{for\,\,all}\,\,x,\,y\,\in D\]
defines the Carath$\acute{e}$odory pseudo-distance on $D$. $C_D$ becomes a complete metric on every bounded homogeneous domain in a complex Banach space. This metric is considered a generalization of $\rho$  as it coincides with the latter on  $\Delta$. For a detailed study of this metric, refer  \cite{FV}, \cite{Kob}. A holomorphic map on B is distance decreasing for the Carath$\acute{e}$odory metric. So, every bi-holomorphic surjection is an isometry for this metric. Let $Aut(B)$ denote the group of all  bi-holomorphic surjections on B. This  is the group of isometries we will be working with. A general element of $Aut(B)$ is of the form $F=U\circ f_{x_0}$,  $x_0\in B$, (see \cite[Theorem VI.1.3]{FV}) where $U$ is a unitary operator on $H$ and $f_{x_0} : B_{x_0} \longrightarrow H$ defined by $f_{x_0}(x)=T_{x_0}\left( \dfrac{x-{x_0}}{1-\big<x,x_0\big>} \right)$ is a holomorphic map, $B_{x_0} = \left\{x \in H \,\,:\,\,\|x\|<\dfrac{1}{\|x_0\|}\right\}$. $f_{x_0}$ restricted to B is a bi-holomorphic surjection.  
 $T_{x_0} : H\longrightarrow H$ is a linear map expressed as 
$T_{x_0}(x)=\dfrac{\left<x,x_0 \right>}{1+\sqrt{1-\|x_0\|^2}}x_0+\sqrt{1-\|x_0\|^2}\,\,x.$ $Aut(B)$ is known to act transitively on B.

A linear representation of the elements of $Aut(B)$ is as follows. Let $\mathcal{A}$ be the sesquilinear form on $H\oplus \mathbb{C}$ defined by $\mathcal{A}((x, \lambda),\,(y,\mu))=\left<x,y\right>-\lambda \overline{\mu}$. Consider the collection of all linear operators on $H \oplus \mathbb{C}$ leaving $\mathcal{A}$ invariant. Such linear operators are bounded and injective by definition. Let $G$ be the group consisting of all the bijective linear operators on $H \oplus \mathbb{C}$ leaving $\mathcal{A}$
 invariant. General form of an element of $G$ is $T=\left[ {\begin{array}{cc}
   A & \xi \\
   \left<\cdot,\dfrac{A^*(\xi)}{a}\right> & a \\
  \end{array} } \right]
$ where $A \in B(H),\,\,\xi \in H$ and $a \in \mathbb{C}$ 
satisfying 
\begin{eqnarray*} 
A^{*}A =I+\dfrac{1}{|a|^{2}}\left<\cdot,{A^{*}(\xi)}\right>A^{*}(\xi)
\end{eqnarray*}
and
\begin{eqnarray*} 
|a|^{2}=1+\|\xi\|^{2}.
\end{eqnarray*}

The center $Z(G)$ of $G$ is $\{e^{i \theta}I, \,\,\,\, \theta \in \mathbb{R}\}$,
 see {\cite[Lemma VI.3.4]{FV}}.

 Let $\sigma(T)$ denote the spectrum of a bounded linear operator $T$.
 
\begin{theorem}[\cite{FV} Theorem VI.3.5] The map $\phi : G \rightarrow \text{Aut(B)}$ defined by  $\phi(T)=\widetilde{T}$ is an onto homomorphism where 
$T=\left[ {\begin{array}{cc}
   A & \xi \\
   \left<\cdot,\dfrac{A^*(\xi)}{a}\right> & a \\
  \end{array} } \right]$ and $\widetilde{T}= \dfrac{A(\cdot)+\xi}{\left<\cdot,\frac{A^*(\xi)}{a}\right>+a}$.
\end{theorem}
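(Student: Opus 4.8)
\emph{Outline of the argument.} The plan is to realize $\widetilde{T}$ as the transformation that $T$ induces on the projective model of $B$. Put $\mathcal{N}=\{(x,\lambda)\in H\oplus\mathbb{C}\,:\,\mathcal{A}((x,\lambda),(x,\lambda))<0\}=\{(x,\lambda)\,:\,\|x\|<|\lambda|\}$. If $(x,\lambda)\in\mathcal{N}$ then $\lambda\neq 0$ and $x/\lambda\in B$; conversely every point of $B$ equals $x/\lambda$ for some $(x,\lambda)\in\mathcal{N}$, and two such vectors give the same point of $B$ precisely when one is a scalar multiple of the other. Since every $T\in G$ preserves $\mathcal{A}$, it preserves the set $\{u:\mathcal{A}(u,u)<0\}=\mathcal{N}$ and carries complex lines to complex lines, so it induces a well-defined self-map of $B$. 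Computing $T(x,1)=\bigl(A(x)+\xi,\ \langle x,A^{*}(\xi)/a\rangle+a\bigr)$ and using $T(x,1)\in\mathcal{N}$, the scalar $\langle x,A^{*}(\xi)/a\rangle+a$ is nonzero for every $x\in B$ and the induced self-map is exactly $\widetilde{T}(x)=(A(x)+\xi)/(\langle x,A^{*}(\xi)/a\rangle+a)$; in particular $\widetilde{T}(B)\subseteq B$.

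Next I would check that $\widetilde{T}\in\text{Aut(B)}$ and that $\phi$ is a homomorphism. Holomorphy of $\widetilde{T}$ on $B$ is immediate, since $x\mapsto A(x)+\xi$ is affine hence Fr\'echet differentiable, and the quotient of an $H$-valued holomorphic map by a nowhere-vanishing scalar holomorphic map is holomorphic. For the homomorphism property, note that the identity of $G$ is $\mathrm{diag}(I,1)$ and $\phi(\mathrm{diag}(I,1))=\mathrm{id}_{B}$, while for $T,S\in G$ the operator $TS$ acts on $H\oplus\mathbb{C}$ as the composition, so it carries the line through $(x,1)$ to the line through $(\widetilde{T}(\widetilde{S}(x)),1)$; since a vector of $\mathcal{N}$ determines its image point in $B$, this forces $\widetilde{TS}=\widetilde{T}\circ\widetilde{S}$. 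Applying this to $T^{-1}T=TT^{-1}=\mathrm{diag}(I,1)$ shows $\widetilde{T^{-1}}$ is a holomorphic two-sided inverse of $\widetilde{T}$, so $\widetilde{T}$ is a biholomorphic surjection of $B$, i.e.\ $\widetilde{T}\in\text{Aut(B)}$. Hence $\phi$ is a well-defined group homomorphism.

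Finally, surjectivity. Every $F\in\text{Aut(B)}$ has the form $F=U\circ f_{x_{0}}$, so since $\phi$ is a homomorphism it suffices to exhibit preimages of an arbitrary unitary $U$ and of an arbitrary $f_{x_{0}}$. The block operator $\mathrm{diag}(U,1)$ lies in $G$ (its two defining relations collapse to $U^{*}U=I$ and $|1|^{2}=1+0$) and $\phi(\mathrm{diag}(U,1))=U|_{B}$. For $f_{x_{0}}$, a short computation gives $T_{x_{0}}(x_{0})=x_{0}$, so $f_{x_{0}}(x)=(T_{x_{0}}(x)-x_{0})/(1-\langle x,x_{0}\rangle)$; I would then set $c=(1-\|x_{0}\|^{2})^{-1/2}$ and take the operator $T$ with block entries $A=c\,T_{x_{0}}$, $\xi=-c\,x_{0}$, $a=c$ (so that, $T_{x_{0}}$ being self-adjoint, its lower-left entry is $\langle\cdot,A^{*}(\xi)/a\rangle=\langle\cdot,-\bar{c}\,x_{0}\rangle$), and verify that $|a|^{2}=1+\|\xi\|^{2}$ and $A^{*}A=I+|a|^{-2}\langle\cdot,A^{*}(\xi)\rangle A^{*}(\xi)$, so that $T\in G$; one then reads off $\widetilde{T}=f_{x_{0}}$, proving $\phi$ onto. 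The only genuinely computational point is the verification of the quadratic relation for this $T$: writing $T_{x_{0}}=\alpha\langle\cdot,x_{0}\rangle x_{0}+\beta I$ with $\alpha=(1+\sqrt{1-\|x_{0}\|^{2}})^{-1}$ and $\beta=\sqrt{1-\|x_{0}\|^{2}}$, it reduces to the elementary identities $|c|^{2}\beta^{2}=1$ and $\alpha^{2}\|x_{0}\|^{2}+2\alpha\beta=1$, and this is the step I expect to demand the most care; everything else is formal once the projective-cone description of $\widetilde{T}$ is in place.
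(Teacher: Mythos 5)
Your proposal is correct, and its surjectivity argument follows the same strategy as the paper's: decompose $F=U\circ f_{x_0}$, take $\mathrm{diag}(U,1)$ as a preimage of the unitary part, and exhibit an explicit preimage of $f_{x_0}$. The difference is in how that preimage is built and verified. The paper determines $\xi=-ax_0$ and $a=\sqrt{1+\|\xi\|^2}$ from $\widetilde{T}(0)=-x_0$ and then defines $A$ abstractly as the positive square root of $S=I+\left<\cdot,\xi\right>\xi$ (so $A(\xi)=a\xi$, $A=I$ on $\left<\xi\right>^{\perp}$), leaving the final identity $\widetilde{T_1}=f_{x_0}$ as an assertion; you instead use the observation $T_{x_0}(x_0)=x_0$ to rewrite $f_{x_0}(x)=(T_{x_0}(x)-x_0)/(1-\left<x,x_0\right>)$ and take $A=c\,T_{x_0}$ with $c=(1-\|x_0\|^2)^{-1/2}$, which makes $\widetilde{T}=f_{x_0}$ an immediate cancellation and reduces membership in $G$ to two one-line scalar identities. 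In fact your $A$ and the paper's $A$ are the same operator, since $c\,T_{x_0}$ acts as $c=a$ on $\left<x_0\right>=\left<\xi\right>$ and as the identity on its orthogonal complement, so the two constructions are reconciled. You also supply the projective-cone justification for well-definedness and the homomorphism property (that $T$ preserves $\{u:\mathcal{A}(u,u)<0\}$ and complex lines, hence induces a self-map of $B$ with nonvanishing denominator), which the paper dismisses as easily verified; this is the right way to make those claims precise, and your route to the $f_{x_0}$ preimage is arguably cleaner and more self-contained than the paper's.
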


\begin{proof}
Well definedness and homomorphic nature of $\phi$ can  easily be verified. We will show that $\phi$ is an onto map. For $x_0 \in B$, let $U \circ f_{{x_0}}$ be a general element of $Aut(B)$. We will find preimages of $f_{{x_0}}$ and $U$ separately. For a unitary element $U \in Aut(B)$, $V=\left[{\begin{array}{cc}
U & 0\\
0 & 1\\
\end{array}}\right]$ will work. Let $x_0 \in B\setminus \{0\}$. To find suitable $A,\, \xi$ and $a$ such that $\dfrac{A(x)+\xi}{\left<x,\frac{A^*(\xi)}{a}\right>+a}=f_{{x_0}}(x)$ for all $x \in B$. Suppose such $A, \xi$ and $a$ exist. This gives $\widetilde{T}(0)=\dfrac{\xi}{a}=f_{{x_0}}(0)=-x_0$. Choose $a=\sqrt{1+\|\xi\|^2}$. Then $\|\xi\|^2=(1+\|\xi\|^2)\|x_0\|^2$ and $\|\xi\|=\dfrac{\|x_0\|}{\sqrt{1-\|x_0\|^2}}$. Hence, choose $\xi=-ax_0$. Now, we will choose a suitable self adjoint operator $A$. Construct an operator $S \in B(H)$ such that $S(\xi)=a^2 \xi$ and $S=I$ on $\left<\xi\right>^{\perp}$. Then $S=I+\left<\cdot,\xi\right>\xi$. For, $x=k\xi+y \in H$, for some $k \in \mathbb{C}$ and $y \in \left<\xi\right>^{\perp}$, $S(x)=a^2k \xi+y+k\xi-k\xi=x+(1+\|\xi\|^2-1)k \xi=x+\|\xi\|^2k \xi+\left<y,\xi\right>\xi=x+\left<k\xi,\xi\right>\xi+\left<y,\xi\right>\xi=x+\left<x,\xi\right>\xi$. Also, $S$ is a positive operator such that $\sigma(S)=\{a^2,\,1\}$ making $S$  invertible. Choose $A$ to be the positive square root of $S$. Then $A(\xi)=a\xi$ and $A=I$ on $\left<\xi\right>^{\perp}$. It now follows that with $a$, $\xi$ and $A$ as chosen above, $T_1=\left[ {\begin{array}{cc}
   A & \xi \\
   \left<\cdot,\xi\right> & a \\
  \end{array} } \right] \in G$ satisfies $\widetilde{T_1}(x)=f_{{x_0}}(x)$. Finally, concluding the detailed working, we have for every element $U \circ f_{x_0} \in Aut(B)$ there exists an isometry of the form $V \circ T_1 \in G$   such that  $\phi(V \circ T_1)=\widetilde{V \circ T_1}=U \circ f_{{x_0}}$.
  \qed
 \end{proof}
 
 \begin{remark}
  Since, $\mathrm{ker}(\phi)=Z(G)$, the map $\widetilde{\phi}:G/{Z(G)} \rightarrow Aut(B)$ is an onto isomorphism.
\end{remark}

\section{Some special subclasses of $G$}
We begin by describing a simplified form of elements of $G$.
\begin{proposition}
 A general element of $G$ is of the form $e^{i \theta} \left[ {\begin{array}{cc}
   UA & U(\xi) \\
   \left<\cdot,\xi\right> & a \\
  \end{array} } \right], \,\,\,\theta \in \mathbb{R}$, where $\xi \in H$, $a=\sqrt{1+\|\xi\|^2}$, $U$ is a unitary operator on $H$ and A is a positive operator on $H$ such that $A=I$ on $\left<\xi\right>^{\perp}$ and $A(\xi)=a\xi$.
 \end{proposition}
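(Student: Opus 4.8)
The plan is to read this off from Theorem 2.1 together with the Remark that $\ker\phi = Z(G) = \{e^{i\theta}I : \theta \in \mathbb{R}\}$, by reusing the explicit preimages built in the proof of Theorem 2.1. Let $T \in G$. By Theorem 2.1, $\widetilde{T} = \phi(T)$ lies in $Aut(B)$, so $\widetilde{T} = U_0 \circ f_{x_0}$ for some unitary operator $U_0$ on $H$ and some $x_0 \in B$.

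First I would handle the generic case $x_0 \neq 0$. The proof of Theorem 2.1 exhibits an element $V \circ T_1 \in G$ with $\phi(V \circ T_1) = U_0 \circ f_{x_0}$, where $V = \left[{\begin{array}{cc} U_0 & 0 \\ 0 & 1 \end{array}}\right]$ and $T_1 = \left[{\begin{array}{cc} A_1 & \xi_1 \\ \langle\cdot,\xi_1\rangle & a_1 \end{array}}\right]$ with $\xi_1 = -a_1 x_0$, $a_1 = \sqrt{1+\|\xi_1\|^2}$, and $A_1$ the positive square root of $I + \langle\cdot,\xi_1\rangle\xi_1$, so that $A_1$ is positive, equals $I$ on $\langle\xi_1\rangle^{\perp}$, and satisfies $A_1(\xi_1) = a_1\xi_1$. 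Multiplying the two blocks gives $V T_1 = \left[{\begin{array}{cc} U_0 A_1 & U_0(\xi_1) \\ \langle\cdot,\xi_1\rangle & a_1 \end{array}}\right]$; its $(2,1)$-entry is consistent with the normalization in Theorem 2.1 since $(U_0 A_1)^{*}(U_0\xi_1) = A_1^{*}\xi_1 = A_1\xi_1 = a_1\xi_1$, whence $\langle\cdot,(U_0 A_1)^{*}(U_0\xi_1)/a_1\rangle = \langle\cdot,\xi_1\rangle$.

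Since $\phi(T) = \widetilde{T} = \phi(V T_1)$ and $\ker\phi = Z(G)$ by the Remark, it follows that $T(V T_1)^{-1} = e^{i\theta}I$ for some $\theta \in \mathbb{R}$, i.e. $T = e^{i\theta} V T_1 = e^{i\theta}\left[{\begin{array}{cc} U_0 A_1 & U_0(\xi_1) \\ \langle\cdot,\xi_1\rangle & a_1 \end{array}}\right]$, which is the asserted form with $U = U_0$, $A = A_1$, $\xi = \xi_1$, $a = a_1$. In the remaining case $x_0 = 0$ the map $\widetilde{T}$ is the unitary $U_0$, whose preimage in the proof of Theorem 2.1 is $V = \left[{\begin{array}{cc} U_0 & 0 \\ 0 & 1 \end{array}}\right]$; the same kernel argument yields $T = e^{i\theta} V$, which is the asserted form with $\xi = 0$, $a = 1$ and $A = I$ (trivially positive, equal to $I$ on $\langle 0\rangle^{\perp} = H$, and fixing $\xi = 0$).

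The argument is thus essentially bookkeeping. The only points needing care are the degenerate case $x_0 = 0$, checking that the three properties demanded of $A$ (positivity, $A = I$ on $\langle\xi\rangle^{\perp}$, $A(\xi) = a\xi$) are exactly those established for $A_1$ in the proof of Theorem 2.1, and the simplification of the $(2,1)$-entry from $\langle\cdot,(UA)^{*}(U\xi)/a\rangle$ to $\langle\cdot,\xi\rangle$ using that $A$ is self-adjoint with $A(\xi) = a\xi$. No new estimate beyond Theorem 2.1 and the Remark is needed, so I do not expect any serious obstacle.
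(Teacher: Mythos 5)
Your proposal is correct and follows essentially the same route as the paper: extract the explicit preimage $V\circ T_1$ from the proof of Theorem 2.1 and then use the Remark that $\ker\phi=Z(G)=\{e^{i\theta}I\}$ to conclude that every element of $G$ differs from such a preimage by a unimodular scalar. Your treatment is in fact slightly more careful than the paper's, since you explicitly handle the degenerate case $x_0=0$ and verify that the $(2,1)$-entry simplifies to $\left<\cdot,\xi\right>$.
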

 \begin{proof}
  From the proof of Theorem 1, we see that that for every element $U \circ f_{x_0} \in Aut(B)$, there exists an isometry of the form $V \circ T_1$ where $V$ is given by  $ \left[ {\begin{array}{cc}
   U & 0 \\
   0 & 1 \\
  \end{array} } \right]$ and $T_1$ is given by $\left[ {\begin{array}{cc}
   A & \xi \\
   \left<\cdot,\xi\right> & a \\
  \end{array} } \right]$. So, $V \circ T_1=\left[ {\begin{array}{cc}
   UA & U(\xi) \\
   \left<\cdot,\xi\right> & a \\
  \end{array} } \right]$ satisfying the desired properties and $\phi(V \circ T_1)=U \circ f_{x_0}$. Also, Remark 1 tells that the pre image of  a general element $U \circ f_{x_{0}}$ of $Aut(B)$ is  the set 
 $\left\{ e^{i \theta} \left[ {\begin{array}{cc}
   UA & U(\xi) \\
   \left<\cdot,\xi\right> & a \\
  \end{array} } \right], \,\,\,\theta \in \mathbb{R}\right\}$ and hence every element of $G$ is of the form  $e^{i \theta} \left[ {\begin{array}{cc}
   UA & U(\xi) \\
   \left<\cdot,\xi\right> & a \\
  \end{array} } \right]$.
 \qed \end{proof}
 From now onwards, $T \in G$ will denote an element of the form $e^{i \theta}\left[ {\begin{array}{cc}
   UA & U(\xi) \\
   \left<\cdot,\xi\right> & a \\
  \end{array} } \right].$\\
  The sesquilinear form $\mathcal{A}$ defined above is identified with the linear operator $A'=\left[ {\begin{array}{cc}
   I & 0 \\
   0 & -1 \\
  \end{array} } \right]$, i.e. $\mathcal{A}((x, \lambda),(y, \mu))=\left<A'(x,\lambda), (y, \mu)\right>$ for all $(x, \lambda),\,(y, \mu) \in H \oplus \mathbb{C}$. 
   This gives
   $T^*=e^{-i \theta}\left[ {\begin{array}{cc}
   (UA)^* & \xi \\
   \left<\cdot,U(\xi)\right> & \overline{a} \\
  \end{array} } \right]$.  Also, for $T$ of the above form, $T^{-1}=e^{-i \theta}\left[ {\begin{array}{cc}
   (UA)^* & -\xi \\
   -\left<\cdot,U(\xi)\right> & \overline{a} \\
  \end{array} } \right]$. Notice that $\overline{a}=a$ as $a \in \mathbb{R}$.\\
  Also observe that in Theorem 1, the operator $V$ is unitary, i.e. $V^*=V^{-1}$ and the  operator $T_1$ is self adjoint, i.e. ${T_1}^*=T_1$. Hence every element of $G$ is a composition of a unitary and a self adjoint element.
  \vspace{5mm}
  
  The following theorem by Hayden and Suffridge gives the fixed point classification of isometries in $Aut(B)$.

  Let $\partial{B}$ denote the boundary of the open unit ball $B$ and $\overline{B}=B \cup \partial 
 {B}$.
 \begin{theorem}[ \cite{HS}]
 If $g \in \text{Aut(B)}$ has no fixed point in \text{B}, then the fixed point set in $\overline{\text{B}}$ consists of one or two points.
 
 \end{theorem}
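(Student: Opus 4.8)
The plan is to push everything through the linear representation $G$ of Theorem 1 and thereby reduce the statement to a question about eigenvectors of a single bounded operator. Let $g\in Aut(B)$ have no fixed point in $B$, and choose $T\in G$ with $\widetilde T=g$ (Theorem 1); by Remark 1 this $T$ is unique up to a central factor $e^{i\theta}I$, so any statement about $T$ invariant under multiplication by unimodular constants is unambiguous. Writing $T$ in the normal form of the Proposition, the denominator $\langle x,\xi\rangle+a$ of $\widetilde T$ satisfies $|\langle x,\xi\rangle+a|\ge a-\|\xi\|=\sqrt{1+\|\xi\|^2}-\|\xi\|>0$ whenever $\|x\|\le 1$; hence $g$ is the restriction to $B$ of a map that is continuous (indeed holomorphic) on a neighbourhood of $\overline{B}$, and since $T$ leaves $\mathcal A$ invariant this extension maps $\overline{B}$ into $\overline{B}$ (an $\mathcal A$-null line goes to an $\mathcal A$-null line, an $\mathcal A$-negative line to an $\mathcal A$-negative line). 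Now a point $p\in\overline{B}$ is fixed by $g$ exactly when $T(p,1)$ is a scalar multiple of $(p,1)$, i.e. when $(p,1)$ is an eigenvector of $T$; conversely every eigenvector $v=(y,\nu)$ of $T$ with $\mathcal A(v,v)\le 0$ has $\nu\neq 0$ and gives the fixed point $y/\nu$, which lies in $B$ when $\mathcal A(v,v)<0$ and in $\partial B$ when $\mathcal A(v,v)=0$. Thus the hypothesis says precisely that $T$ has no $\mathcal A$-negative eigenvector, and it suffices to prove that $T$ has exactly one or two $\mathcal A$-null eigenlines.

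The upper bound is pure linear algebra. First, two linearly independent $\mathcal A$-null vectors $v,w$ always satisfy $\mathcal A(v,w)\neq 0$: otherwise $\mathrm{span}(v,w)$ would be a two-dimensional totally $\mathcal A$-isotropic subspace, which cannot exist because the projection $(x,\lambda)\mapsto\lambda$ is injective on any $\mathcal A$-isotropic subspace (a null vector $(x,0)$ has $\|x\|=0$, so it is zero), whence isotropic subspaces are at most one-dimensional. Next, if $v,w$ are $\mathcal A$-null eigenvectors of $T$ with eigenvalues $\mu,\nu$, then $\mathcal A(v,w)=\mathcal A(Tv,Tw)=\mu\overline\nu\,\mathcal A(v,w)$, and $\mathcal A(v,w)\neq 0$ forces $\mu\overline\nu=1$. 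If $T$ possessed three distinct $\mathcal A$-null eigenlines, applying this to the three pairs would force the three eigenvalues to coincide, say to $\mu$; taking two of the corresponding eigenvectors $v,w$ and setting $c=\mathcal A(v,w)\neq 0$, the vector $v-(c/|c|)w$ is again a $\mu$-eigenvector of $T$ and satisfies $\mathcal A\big(v-(c/|c|)w,\,v-(c/|c|)w\big)=-2|c|<0$, i.e. it is an $\mathcal A$-negative eigenvector, contradicting the hypothesis. Hence $g$ has at most two fixed points in $\overline{B}$, all of them on $\partial B$.

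What remains — producing at least one fixed point — is the genuinely infinite-dimensional step and the main obstacle, since a bounded operator on $H\oplus\mathbb C$ need not have any eigenvector at all, so no algebraic argument of the above kind can manufacture one; this is exactly the content secured in \cite{HS}. I would obtain it from the Denjoy–Wolff phenomenon for the Hilbert ball: a fixed-point-free holomorphic self-map of $B$ has its iterates converging to a boundary point $\zeta\in\partial B$. Applying this to $g$ and using the continuous extension of $g$ to $\overline{B}$ from the first step, $g(\zeta)=\lim_n g\big(g^n(x)\big)=\lim_n g^{n+1}(x)=\zeta$, so $\zeta$ is a fixed point of $g$ lying on $\partial B$. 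Together with the upper bound, the fixed point set of $g$ in $\overline{B}$ is a nonempty subset of $\partial B$ of cardinality one or two, which is the assertion; moreover the two cases are distinguished by the lifted eigenvalues, since for two distinct boundary fixed points $\mu\overline\nu=1$ together with $\mu\neq\nu$ forces $|\mu|\neq 1$ — the parabolic versus hyperbolic dichotomy.
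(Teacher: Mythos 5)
Your reduction to the linear representation and your upper bound are correct and carefully done: the correspondence between fixed points in $\overline{B}$ and eigenvectors $(p,1)$ of a lift $T\in G$, the impossibility of a two-dimensional $\mathcal{A}$-isotropic subspace, the relation $\mu\overline{\nu}=1$ for eigenvalues attached to null eigenlines, and the manufacture of an $\mathcal{A}$-negative eigenvector from three null eigenlines are all sound. (For calibration: the paper offers no proof of this theorem at all --- it is quoted from Hayden--Suffridge --- so your argument has to stand entirely on its own.)

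It does not, because of the existence step. The ``Denjoy--Wolff phenomenon for the Hilbert ball'' that you invoke --- iterates of a fixed-point-free biholomorphic self-map of $B$ converge to a boundary point --- is false in infinite dimensions: Stachura (Proc.\ Amer.\ Math.\ Soc.\ 93 (1985), 88--90) constructed a fixed-point-free automorphism of the unit ball of $\ell^2$ whose orbit does not converge to any boundary point. The Denjoy--Wolff theorem survives in infinite dimensions only under extra hypotheses (compactness of the map, firm nonexpansivity, and the like) which automorphisms of $B$ need not satisfy; and even where a weak-convergence version holds, your computation $g(\zeta)=\lim_n g(g^n(x))$ would additionally require weak continuity of the extension and that the weak limit actually lie on $\partial B$, neither of which is automatic. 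Since you yourself identify this as ``the genuinely infinite-dimensional step and the main obstacle,'' resting it on a theorem that fails in exactly this setting (or, on the alternative reading, simply citing \cite{HS} for it) leaves the nonemptiness of the fixed point set unproved. The repair is available inside your own framework: the linear fractional extension $x\mapsto (UAx+U\xi)/(\langle x,\xi\rangle+a)$ is weak-to-weak continuous on $\overline{B}$ (the denominator is weakly continuous and bounded below by $a-\|\xi\|>0$, the numerator is affine and bounded), and $\overline{B}$ is convex and weakly compact, so the Tychonoff fixed point theorem produces a fixed point in $\overline{B}$, necessarily on $\partial B$ under your hypothesis; equivalently, Pontryagin's invariant subspace theorem gives the lift $T$ an eigenvector $v$ with $\mathcal{A}(v,v)\le 0$. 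Either of these closes the gap; Denjoy--Wolff does not.
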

  We call an isometry in $Aut(B)$ \textit{elliptic} if has a fixed point in B, hyperbolic or parabolic if it is not elliptic and has two or one fixed points on $\partial{B}$ respectively.
  
 We say an element of $G$ is elliptic (resp. hyperbolic or parabolic) if it is in the pre image of an elliptic (resp. hyperbolic or parabolic) isometry of $Aut(B)$ defined by the homomorphism $\phi$.
 
\vspace{5mm}
 Observe that for $x \in \overline{B}$, $x$ is a fixed point for an isometry $\dfrac{UA+ U(\xi)}{\left<\cdot, \xi \right> +a} \in Aut(B)$ if and only if $(x,1)$ is an eigenvector  for the corresponding element in $G$ having eigenvalue $\left<x, \xi\right>+a$. 
  \vspace{5mm}
 
 Here from, we will assume $\xi \neq 0$, unless stated otherwise.
 
 \vspace{5mm}
  We will now investigate a subclass of isometries of $G$ which decomposes $H \oplus \mathbb{C}$ into a two dimensional subspace containing $\mathbb{C}$ and its orthogonal complement.
  
  We know that a general element $T$ of $G$ is of the form $e^{i \theta}\left[{\begin{array}{cc}
 UA & U(\xi)\\
 \left<\cdot, \xi \right> & a\\
 \end{array}}\right]$ where $A$ decomposes $H$ into $\left<\xi\right>$ and $\left<\xi\right>^{\perp}$. If the unitary operator $U$ is chosen in a manner so as to leave $\left<\xi\right>$ invariant, then all the isometries of $G$ comprising of such unitary operators are precisely the ones decomposing $H \oplus \mathbb{C}$ into $\left<\xi\right> \oplus \mathbb{C}$ and its orthogonal complement. 
 
 Thus we have the following.
 \begin{proposition}
 Let $T=\left[{\begin{array}{cc}
 UA & U(\xi)\\
 \left<\cdot,\xi\right> & a\\
 \end{array}}\right] \in G$. Then $T=T_1 \oplus T_2$ where $T_1=T\restriction_{\left<\xi\right> \oplus \mathbb{C}}$ and $T_2=U\restriction_{\left<\xi\right>^{\perp}}$ if and only if $U(\xi)=r\xi$, $|r|=1$.
 \end{proposition}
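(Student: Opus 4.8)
The plan is to prove both implications directly by working out how the block operator $T = e^{i\theta}\left[{\begin{smallmatrix} UA & U(\xi)\\ \langle\cdot,\xi\rangle & a\end{smallmatrix}}\right]$ acts on the orthogonal decomposition $H\oplus\mathbb{C} = \big(\langle\xi\rangle\oplus\mathbb{C}\big)\oplus\langle\xi\rangle^\perp$. The key structural fact I would invoke first, taken from Proposition 3.1 (and the discussion preceding Theorem 1), is that $A$ itself respects the decomposition $H = \langle\xi\rangle\oplus\langle\xi\rangle^\perp$: indeed $A(\xi)=a\xi$ and $A = I$ on $\langle\xi\rangle^\perp$. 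So for the ``if'' direction, assume $U(\xi)=r\xi$ with $|r|=1$. Then I would check that $T$ maps $\langle\xi\rangle\oplus\mathbb{C}$ into itself: a vector $(k\xi,\lambda)$ is sent to $e^{i\theta}\big(kUA(\xi) + \lambda U(\xi),\; k\langle\xi,\xi\rangle + \lambda a\big) = e^{i\theta}\big((ka + \lambda)r\xi,\; k\|\xi\|^2 + \lambda a\big)$, which again lies in $\langle\xi\rangle\oplus\mathbb{C}$. And $T$ maps $\langle\xi\rangle^\perp$ into itself: for $y\perp\xi$ we get $Ty = e^{i\theta}\big(UA(y),\, \langle y,\xi\rangle\big) = e^{i\theta}\big(U(y),0\big)$, and since $U$ is unitary with $U(\xi)=r\xi$, $U$ preserves $\langle\xi\rangle^\perp$, so $U(y)\in\langle\xi\rangle^\perp$. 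Hence $T = T_1\oplus T_2$ with $T_1 = T\restriction_{\langle\xi\rangle\oplus\mathbb{C}}$ and, reading off the computation above, $T_2 = e^{i\theta}U\restriction_{\langle\xi\rangle^\perp}$; here I should note the harmless cosmetic point that the scalar $e^{i\theta}$ can be absorbed (or the statement understood up to the central factor), so that $T_2$ is $U\restriction_{\langle\xi\rangle^\perp}$ as claimed.

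For the converse, suppose $T = T_1\oplus T_2$ with $T_1 = T\restriction_{\langle\xi\rangle\oplus\mathbb{C}}$ and $T_2 = U\restriction_{\langle\xi\rangle^\perp}$. The decomposition hypothesis forces $T$ to leave $\langle\xi\rangle\oplus\mathbb{C}$ invariant; applying $T$ to the vector $(\xi,0)\in\langle\xi\rangle\oplus\mathbb{C}$ gives $T(\xi,0) = e^{i\theta}\big(UA(\xi),\,\langle\xi,\xi\rangle\big) = e^{i\theta}\big(a\,U(\xi),\,\|\xi\|^2\big)$. For this to lie in $\langle\xi\rangle\oplus\mathbb{C}$ we need $U(\xi)\in\langle\xi\rangle$, i.e. $U(\xi)=r\xi$ for some scalar $r$; and since $U$ is unitary and $\xi\neq 0$ (our standing assumption), $|r|=1$. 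This is the whole content of the converse, so it is short.

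The main obstacle — really the only subtlety — is the bookkeeping of the scalar $e^{i\theta}$ and making precise what ``$T = T_1\oplus T_2$'' means as a statement about restrictions versus as an abstract direct sum; once it is agreed that the identification is up to the central element $e^{i\theta}I$ (equivalently, after passing to $G/Z(G)$ as in Remark 1, or simply by stating $T_2 = e^{i\theta}U\restriction_{\langle\xi\rangle^\perp}$), everything reduces to the two one-line invariance checks above. I would also remark, for completeness, that when $U(\xi)=r\xi$ the block $T_1$ acting on the two-dimensional space $\langle\xi\rangle\oplus\mathbb{C}$ has matrix $e^{i\theta}\left[{\begin{smallmatrix} ra & r\\ \|\xi\|^2 & a\end{smallmatrix}}\right]$ in the basis $\{\xi,(0,1)\}$ (or $\{\xi/\|\xi\|,(0,1)\}$ after normalizing), which is exactly the kind of $2\times 2$ ``reducing subspace'' isometry that the paper goes on to analyze — but that computation is not needed for the proposition itself.
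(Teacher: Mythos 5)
Your argument is correct and follows essentially the same route as the paper: a direct verification that $\left<\xi\right>\oplus\mathbb{C}$ and its orthogonal complement are both invariant when $U(\xi)=r\xi$ (using $A(\xi)=a\xi$, $A=I$ on $\left<\xi\right>^{\perp}$, and unitarity of $U$), and for the converse applying $T$ to $(\xi,0)$ to force $U(\xi)\in\left<\xi\right>$. The remark about the scalar $e^{i\theta}$ is moot since the proposition as stated omits that central factor, but it does no harm.
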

 \begin{proof}
 Write $H \oplus \mathbb{C}=\left<\xi\right> \oplus \left<\xi\right>^{\perp} \oplus \mathbb{C}$. Let 
 $U(\xi)=r\xi$, $|r|=1$. Then for any element $(\xi, z)\in \left<\xi\right> \oplus \mathbb{C}$, $\left[{\begin{array}{cc}
 UA & U(\xi)\\
 \left<\cdot,\xi\right> & a\\
 \end{array}}\right]\left[{\begin{array}{c}
  \xi\\
 z\\
 \end{array}}\right]=\left[{\begin{array}{c}
 r(a+z)\xi\\
 \|\xi\|^2+az\\
 \end{array}}\right] \in \left<\xi\right> \oplus \mathbb{C}$. Also for $x \in \left<\xi\right>^{\perp}$,  $\left[{\begin{array}{cc}
 UA & U(\xi)\\
 \left<\cdot,\xi\right> & a\\
 \end{array}}\right]\left[{\begin{array}{c}
 x\\
 0\\
 \end{array}}\right]=\left[{\begin{array}{c}
 U(x)\\
 0\\
 \end{array}}\right] \in \left(\left<\xi\right> \oplus \mathbb{C}\right)^{\perp}$. Thereby showing $\left<\xi\right> \oplus \mathbb{C}$ is a reducing subspace for $T$ and $T\restriction_{({\left<\xi\right> \oplus \mathbb{C}})^{\perp}}=U\restriction_{\left<\xi\right>^{\perp}}$.\\
 Conversely, if $T=T_1 \oplus T_2$, then invariance of $\left<\xi\right> \oplus \mathbb{C}$  under $T$ yields invariance of $\left<\xi\right>$ under $U$.
  \end{proof} 
 \begin{proposition}
 Let $T=\left[{\begin{array}{cc}
 UA & r \xi\\
 \left<\cdot,\xi\right> & a\\
 \end{array}}\right] \in G$, $|r|=1$. Then
 \begin{enumerate}
 \item $\sigma(T)=\{\lambda_1,\,\lambda_2\} \cup \sigma(U\restriction_{\left<\xi\right>^{\perp}})$ where $\lambda_1, \,\lambda_2=\dfrac{a(r+1)\pm \sqrt{a^2 (r+1)^2-4r}}{2}$ respectively.
  The eigenspaces corresponding to the eigenvalues $\lambda_1$ and $\lambda_2$ are generated by the eigenvectors $\left(k_1\xi,1\right)$ and $(k_2 \xi,1)$ where
  
   $k_1,\,k_2=\dfrac{a(r-1)\pm \sqrt{a^2(r+1)^2-4r}}{2 \|\xi\|^2}$ respectively.
 \item $|\lambda_1|=\dfrac{1}{|\lambda_2|}$ and $\|k_1\xi\|=\dfrac{1}{\|k_2\xi\|}$.
 \end{enumerate}
 \end{proposition}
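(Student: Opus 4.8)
The plan is to reduce everything to a $2\times 2$ computation by invoking Proposition 3.4. Since the $(1,2)$-entry of $T$ is $U(\xi)=r\xi$ with $|r|=1$ (which is automatic, $U$ being unitary), Proposition 3.4 applies and gives $T=T_1\oplus T_2$, where $T_1=T\restriction_{\langle\xi\rangle\oplus\mathbb{C}}$ acts on the two-dimensional space $\langle\xi\rangle\oplus\mathbb{C}$ and $T_2=U\restriction_{\langle\xi\rangle^{\perp}}$. For an orthogonal direct sum of bounded operators one has $T-\lambda=(T_1-\lambda)\oplus(T_2-\lambda)$, which is invertible iff each summand is, so $\sigma(T)=\sigma(T_1)\cup\sigma(U\restriction_{\langle\xi\rangle^{\perp}})$, and the remaining task is to analyse $T_1$.

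Next I would write the matrix of $T_1$ in the basis $\{(\xi,0),(0,1)\}$ of $\langle\xi\rangle\oplus\mathbb{C}$. Using $A\xi=a\xi$ and $U\xi=r\xi$ one gets $T_1(\xi,0)=(ar\xi,\|\xi\|^2)$ and $T_1(0,1)=(r\xi,a)$, so $T_1$ is represented by $M=\left[{\begin{array}{cc} ar & r\\ \|\xi\|^2 & a\end{array}}\right]$. Then $\mathrm{tr}\,M=a(r+1)$ and $\det M=r(a^2-\|\xi\|^2)=r$, using $a^2=1+\|\xi\|^2$; hence the characteristic polynomial is $\lambda^2-a(r+1)\lambda+r$, and the quadratic formula gives $\lambda_1,\lambda_2=\tfrac12\big(a(r+1)\pm\sqrt{a^2(r+1)^2-4r}\big)$. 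For the eigenvectors, solving the second coordinate of $(M-\lambda_i I)(v,1)^{T}=0$ yields $v=(\lambda_i-a)/\|\xi\|^2$; since $\lambda_i-a=\tfrac12\big(a(r-1)\pm\sqrt{a^2(r+1)^2-4r}\big)$, this $v$ is exactly $k_i$, and transporting back to $H\oplus\mathbb{C}$ the eigenvector is $(k_i\xi,1)$, with the sign in the radical matched between $\lambda_i$ and $k_i$.

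For part (2), the product of the roots of the characteristic polynomial equals its constant term $r$, so $|\lambda_1||\lambda_2|=|r|=1$, i.e. $|\lambda_1|=1/|\lambda_2|$. For the eigenvectors, $k_1k_2=\dfrac{(\lambda_1-a)(\lambda_2-a)}{\|\xi\|^4}=\dfrac{\lambda_1\lambda_2-a(\lambda_1+\lambda_2)+a^2}{\|\xi\|^4}=\dfrac{r-a^2(r+1)+a^2}{\|\xi\|^4}=\dfrac{r(1-a^2)}{\|\xi\|^4}=\dfrac{-r}{\|\xi\|^2}$, so that $\|k_1\xi\|\,\|k_2\xi\|=|k_1k_2|\,\|\xi\|^2=1$, giving $\|k_1\xi\|=1/\|k_2\xi\|$.

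I do not expect a genuine obstacle: once Proposition 3.4 is available everything is elementary linear algebra on a $2\times2$ block. The only points needing care are recording correctly that the spectrum of an orthogonal direct sum is the union of the two spectra (so that the $\sigma(U\restriction_{\langle\xi\rangle^{\perp}})$ summand is not lost), keeping the choice of sign in the square root consistent between $\lambda_i$ and $k_i$, and noting that all the displayed formulas remain valid in the degenerate case $a^2(r+1)^2=4r$, where $\lambda_1=\lambda_2$ and $k_1=k_2$ (the eigenspace then still being spanned by $(k_1\xi,1)$).
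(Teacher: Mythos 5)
Your proposal is correct and follows essentially the same route as the paper: invoke the decomposition $T=T_1\oplus T_2$ from the preceding proposition, reduce to the two-dimensional block on $\left<\xi\right>\oplus\mathbb{C}$, and read off eigenvalues and eigenvectors, with part (2) following from the products $\lambda_1\lambda_2=r$ and $k_1k_2=-r/\|\xi\|^2$. The only cosmetic difference is that you extract the quadratic via the trace and determinant of the $2\times 2$ matrix in the basis $\{(\xi,0),(0,1)\}$, whereas the paper writes the eigenvector equation directly and eliminates to get $k^2\|\xi\|^2+a(1-r)k-r=0$; these are equivalent computations.
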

 \begin{proof}
 (1) As $T=T_1 \oplus T_2$ (from Proposition 2), $\sigma(T)=\sigma(T_1) \cup \sigma(T_2)$. Clearly, $\sigma(T_2)=\sigma(U \restriction_{\left<\xi\right>^{\perp}})$. Also, $T_1$ is an operator on a 2 dimensional space $\left<\xi\right> \oplus \mathbb{C}$. So, $\lambda$ is an eigenvalue of $T_1$ with eigenvector $(k \xi,1)$ if $\left[{\begin{array}{cc}
 UA & r \xi\\
 \left<\cdot,\xi\right> & a\\
 \end{array}}\right]\left[{\begin{array}{c}
 k\xi\\
 1\\
 \end{array}}\right]=\lambda\left[{\begin{array}{c}
 k\xi\\
 1\\
 \end{array}}\right]$ which gives $r(ak+1)\xi=\lambda k\xi$ and $\lambda=k\|\xi\|^2+a$. Simplification of these two expressions yield $k^2\|\xi\|^2+a(1-r)k-r=0$. Hence $k=\dfrac{a(r-1)\pm \sqrt{a^2(r+1)^2-4r}}{2 \|\xi\|^2}$ (using $a^2=1+\|\xi\|^2$) and \\
 $\lambda=\dfrac{a(r+1)\pm \sqrt{a^2(r+1)^2-4r}}{2}$. Note that $(\xi, 0)$ cannot be an eigenvector for $T_1$.
 
 (2) Observe that $\lambda_1 \lambda_2=r$  and $k_1 k_2=\dfrac{-r}{\|\xi\|^2}$. This gives $|\lambda_1|=\dfrac{1}{|\lambda_2|}$ and $\|k_1\xi\|=\dfrac{1}{\|k_2\xi\|}$ respectively. 
 \qed \end{proof}
 In general, we have 
 \begin{proposition}
 Let $T=\left[{\begin{array}{cc}
 UA & U(\xi)\\
 \left<\cdot,\xi\right> & a\\
 \end{array}}\right] \in G$ and $K$ be a closed subspace of $H$ containing $\xi$ such that $K$ is invariant under $U$. Then $T$ decomposes $H \oplus \mathbb{C}$ into $K \oplus \mathbb{C}$ and its orthogonal complement.
 \end{proposition}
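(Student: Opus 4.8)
The plan is to check directly that $K\oplus\mathbb{C}$ is a reducing subspace for $T$, so that $T$ splits as the direct sum of its restrictions to $K\oplus\mathbb{C}$ and to $(K\oplus\mathbb{C})^{\perp}$, and then to identify the second summand. As a preliminary I would record that, with the inner product $\langle(x,\lambda),(y,\mu)\rangle=\langle x,y\rangle+\lambda\overline{\mu}$ on $H\oplus\mathbb{C}$, one has $(K\oplus\mathbb{C})^{\perp}=K^{\perp}\oplus\{0\}$, which we identify with $K^{\perp}$.

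The first key step is to show $A(K)\subseteq K$. Since $\xi\in K$ and $K$ is closed, $K=\langle\xi\rangle\oplus(K\cap\langle\xi\rangle^{\perp})$; writing $k=c\xi+k'$ with $k'\in K\cap\langle\xi\rangle^{\perp}$ and using $A(\xi)=a\xi$ together with $A=I$ on $\langle\xi\rangle^{\perp}$ gives $A(k)=ca\xi+k'\in K$. Consequently, for $(k,z)\in K\oplus\mathbb{C}$ one computes $T(k,z)=e^{i\theta}\bigl(UA(k)+zU(\xi),\,\langle k,\xi\rangle+az\bigr)$, in which $UA(k)\in U(K)\subseteq K$ by the previous step and the $U$-invariance of $K$, and $U(\xi)\in K$ since $\xi\in K$; hence the $H$-component lies in $K$ and the scalar part in $\mathbb{C}$, so $T(K\oplus\mathbb{C})\subseteq K\oplus\mathbb{C}$.

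For the orthogonal complement, take $x\in K^{\perp}$. Then $x\perp\xi$, so $A(x)=x$ and $\langle x,\xi\rangle=0$, whence $T(x,0)=e^{i\theta}(U(x),0)$; and $U(x)\in K^{\perp}$ because $K$ reduces $U$. Here I would read ``$K$ invariant under $U$'' as meaning ``$K$ reduces $U$'', i.e. $U(K^{\perp})\subseteq K^{\perp}$ as well; for a unitary operator on an infinite-dimensional Hilbert space mere invariance of $K$ does not force this, whereas it holds automatically in the special case $K=\langle\xi\rangle$ of Proposition 2, where $U(\xi)=r\xi$. Granting it, $(K\oplus\mathbb{C})^{\perp}$ is also $T$-invariant, so $K\oplus\mathbb{C}$ reduces $T$; therefore $T=T\!\restriction_{K\oplus\mathbb{C}}\oplus T\!\restriction_{K^{\perp}}$, and the computation above identifies $T\!\restriction_{K^{\perp}}$ with $U\!\restriction_{K^{\perp}}$.

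I expect the only genuine point needing care to be the reducing-versus-invariant issue above; the rest is a routine block-matrix calculation exploiting the special form of $A$, essentially the same as in Proposition 2 (the case $K=\langle\xi\rangle$). An alternative would be to show in addition that $K\oplus\mathbb{C}$ is invariant under $T^{*}$, using the form of $T^{*}$ recorded earlier in the text; but since $T^{*}$ involves $U^{*}$ this again reduces to requiring $U^{*}(K)=U^{-1}(K)\subseteq K$, which is one more way of saying that $K$ reduces $U$.
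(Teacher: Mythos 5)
Your proof is correct and follows essentially the same route as the paper's: a direct check that $K\oplus\mathbb{C}$ and its orthogonal complement $K^{\perp}\oplus\{0\}$ are both carried into themselves by $T$, using the decomposition of an element of $K$ along $\langle\xi\rangle$ and $\langle\xi\rangle^{\perp}\cap K$ together with the fact that $A=I$ on $\langle\xi\rangle^{\perp}$. The one place you go beyond the paper is the invariant-versus-reducing point, and you are right to insist on it: the paper's proof simply asserts that $y\in K^{\perp}$ implies $U(y)\in K^{\perp}$, which does not follow from $U(K)\subseteq K$ alone for a unitary operator on an infinite-dimensional space. For instance, take $U$ the bilateral shift on $\ell^{2}(\mathbb{Z})$, $\xi=e_{0}$ and $K=\overline{\operatorname{span}}\{e_{n}:n\geq 0\}$; then $K$ is closed, contains $\xi$, and is $U$-invariant, yet $T(e_{-1},0)=(e_{0},0)\notin(K\oplus\mathbb{C})^{\perp}$, so $T$ does not decompose $H\oplus\mathbb{C}$ as claimed. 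Hence the hypothesis must be read as ``$K$ reduces $U$'' (equivalently, $U(K)=K$), exactly as you propose; with that reading your argument, like the paper's, is complete, and your closing remark that one could instead verify invariance under $T^{*}$ leads back to the same requirement.
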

 \begin{proof}
Represent a general element $y \in K$ as $y=k \xi +x$, where $k \in \mathbb{C}$ and $x \in \left<\xi\right>^{\perp} \cap K$. Let $(k\xi+x,z) \in K \oplus \mathbb{C}$, $z \in \mathbb{C}$. Then $\left[{\begin{array}{cc}
 UA & U(\xi)\\
 \left<\cdot,\xi\right> & a\\
 \end{array}}\right]\left[{\begin{array}{c}
 k\xi+x\\
 z\\
 \end{array}}\right]=\left[{\begin{array}{c}
 U((ak+z)\xi+x)\\
 k\|\xi\|^2+az\\
 \end{array}}\right] \in K \oplus \mathbb{C}$ as $K$ is invariant under $U$ thereby making $K \oplus \mathbb{C}$ invariant under $T$. Also,  for $y \in K^{\perp},\,\left[{\begin{array}{cc}
 UA & U(\xi)\\
 \left<\cdot,\xi\right> & a\\
 \end{array}}\right]\left[{\begin{array}{c}
 y\\
 0\\
 \end{array}}\right]=\left[{\begin{array}{c}
 U(y)\\
 0\\
 \end{array}}\right] \in (K \oplus \mathbb{C})^{\perp}$.
 \qed \end{proof}
An operator $T \in {H}$  is said to be \emph{normal} if $T^*T=TT^*$. We are now ready to describe the normal isometries of $G$. Class of normal isometries forms a subpiece of the above defined subclass of $G$. In exact terms, if $\left<\xi\right> \oplus \mathbb{C}$ reduces $T$, then $\left<\xi\right>$ reduces $U$. In particular, the class of all such isometries where $U$ acts as identity on $\left<\xi\right>$ precisely forms the class of normal isometries in $G$. 
  \begin{theorem}[Normal elements of $G$] Let $S \in G$ be given by $e^{i \theta}\left[ {\begin{array}{cc}
   UA & U(\xi) \\
   \left<\cdot,\xi\right> & a \\
  \end{array} } \right]$, $\theta \in \mathbb{R}$. Then
  
  \begin{enumerate}
 \item $S$ is normal if and only if $S=e^{i \theta}\left[ {\begin{array}{cc}
   UA & \xi \\
   \left<\cdot,\xi\right> & a \\
  \end{array} } \right]$, i.e. $U(\xi)=\xi$.
  
  \item  $S$ is unitary if and only if  $S=e^{i \theta} \left[ {\begin{array}{cc}
   U & 0 \\
   0 & 1 \\
  \end{array} } \right]$, i.e. $\xi=0$.
  \item If $S$ is normal, then $\sigma(S)=\{a \pm \|\xi\| \} \cup \sigma\left(U \big|_{\left<\xi\right>^{\perp}}\right)$, where $a \pm \|\xi\|$ are both positive, non-unit modulus and inverses of each other.  Eigenspaces corresponding to the eigenvalues $a \pm \|\xi\|$ are spanned by the eigenvectors  $\left(\pm \dfrac{\xi}{\|\xi\|},1\right)$ respectively.
  \item Normal isometries are hyperbolic in nature.
  \end{enumerate}
  \end{theorem}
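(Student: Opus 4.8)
The four assertions are essentially independent, and each reduces to a computation with the explicit block forms of $S$, $S^{*}$ and $S^{-1}$ recorded just after Proposition 1. For (1), the plan is to compute $S^{*}S$ and $SS^{*}$ head-on. The central scalar $e^{i\theta}$ drops out of both products, and the entries simplify using $(UA)^{*}=AU^{*}$ (since $A$ is self-adjoint), $U^{*}U=I$, $A\xi=a\xi$, the identity $A^{2}=I+\langle\cdot,\xi\rangle\xi$ forced by the description of $A$ in Proposition 1, and $a^{2}=1+\|\xi\|^{2}$. One finds $S^{*}S(x,\lambda)=\bigl(x+2\langle x,\xi\rangle\xi+2a\lambda\xi,\ 2a\langle x,\xi\rangle+(2a^{2}-1)\lambda\bigr)$, while $SS^{*}(x,\lambda)$ is the very same expression with every inner occurrence of $\xi$ replaced by $U\xi$; comparing the scalar components gives $\langle x,\xi\rangle=\langle x,U\xi\rangle$ for all $x$, i.e.\ $U\xi=\xi$, and conversely $U\xi=\xi$ makes the two operators coincide. (Equivalently, writing $S=e^{i\theta}VT_{1}$ with $V$ the unitary and $T_{1}$ the self-adjoint block from Proposition 1, one has $S^{*}S=T_{1}^{2}$ and $SS^{*}=VT_{1}^{2}V^{*}$, so normality amounts to $VT_{1}V^{*}=T_{1}$ by uniqueness of positive square roots, which unwinds again to $U\xi=\xi$.)

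For (2), it suffices to compare the recorded forms of $S^{*}$ and $S^{-1}$: they agree except in the signs of the off-diagonal entries $\pm\xi$ and $\mp\langle\cdot,U\xi\rangle$, so $S$ is unitary $\iff S^{*}=S^{-1}\iff\xi=0$, and when $\xi=0$ we have $a=1$, $A=I$, so $S=e^{i\theta}(U\oplus 1)$ is manifestly unitary. For (3), assume $S$ is normal, so $U\xi=\xi$ by part (1); this is the case $r=1$ of Proposition 2, so $S=e^{i\theta}(S_{1}\oplus S_{2})$ with $S_{1}=S\restriction_{\langle\xi\rangle\oplus\mathbb{C}}$ and $S_{2}=U\restriction_{\langle\xi\rangle^{\perp}}$, whence $\sigma(S)=\sigma(S_{1})\cup\sigma(S_{2})$. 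Setting $r=1$ in Proposition 3(1) collapses the discriminant to $4(a^{2}-1)=4\|\xi\|^{2}$, so $S_{1}$ has exactly the two eigenvalues $\lambda_{1,2}=a\pm\|\xi\|$ with $k_{1,2}=\pm\|\xi\|^{-1}$, i.e.\ eigenvectors $(\pm\xi/\|\xi\|,1)$. Positivity is immediate from $a=\sqrt{1+\|\xi\|^{2}}>\|\xi\|$; mutual reciprocity from $\lambda_{1}\lambda_{2}=a^{2}-\|\xi\|^{2}=1$; and since $\xi\neq0$ forces $a>1$ we get $\lambda_{1}>1>\lambda_{2}>0$, so neither has unit modulus.

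For (4), the plan is to count the fixed points of $\widetilde{S}=\phi(S)$ in $\overline{B}$ via the eigenvector criterion for fixed points stated in Section 3: a point $x\in\overline{B}$ is fixed by $\widetilde{S}$ precisely when $(x,1)$ spans an eigenline of $S$. By part (3) the eigenlines of $S_{1}\oplus S_{2}$ with nonzero last coordinate are exactly $\mathbb{C}(\xi/\|\xi\|,1)$ and $\mathbb{C}(-\xi/\|\xi\|,1)$: any eigenvector contributed by $S_{2}=U\restriction_{\langle\xi\rangle^{\perp}}$ lies in $\langle\xi\rangle^{\perp}\oplus\{0\}$, and $S_{1}$, having the two distinct eigenvalues $a\pm\|\xi\|$, is diagonalized by the vectors $(\pm\xi/\|\xi\|,1)$, no nonzero scalar multiple of which has vanishing last coordinate. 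Hence the fixed-point set of $\widetilde{S}$ in $\overline{B}$ is exactly $\{\,\xi/\|\xi\|,\ -\xi/\|\xi\|\,\}\subset\partial B$, two distinct points because $\xi\neq0$. In particular $\widetilde{S}$ has no fixed point in $B$, so it is not elliptic; the theorem of Hayden and Suffridge recalled above then says its fixed-point set in $\overline{B}$ has one or two points, and we have produced two, so $\widetilde{S}$ has exactly two fixed points on $\partial B$ --- that is, $\widetilde{S}$, and hence $S$, is hyperbolic.

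The only genuinely delicate point is the eigenvector bookkeeping in (4): one must be sure that no eigenline of $S$ of the form $\mathbb{C}(x,1)$ has been overlooked, since this is exactly what both excludes an interior fixed point and caps the number of boundary fixed points at two --- here the splitting from Proposition 2 together with the distinctness $\lambda_{1}\neq\lambda_{2}$ from part (3) does all the work. Everything else is routine manipulation of the block forms, and throughout the scalar $e^{i\theta}$ is harmless: it cancels in $S^{*}S$ and $SS^{*}$, it does not affect the fixed-point correspondence because $\ker\phi=Z(G)$, and in part (3) one simply reads $\sigma$ off the central representative.
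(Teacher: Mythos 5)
Your proposal is correct and follows essentially the same route as the paper: direct computation of $SS^{*}$ and $S^{*}S$ for (1), comparison of the recorded forms of $S^{*}$ and $S^{-1}$ for (2), the $r=1$ case of Proposition 3 for (3), and the eigenvector--fixed-point correspondence together with the Hayden--Suffridge theorem for (4). You simply carry out in full the computations the paper leaves as ``direct'' or ``straightforward,'' and your explicit bookkeeping of the eigenlines in (4) is, if anything, more careful than the paper's one-line justification.
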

  \begin{proof}
(1)  For an element $S \in G$, direct computation shows that $SS^*=S^*S$ if and only if $U(\xi)=\xi$.

(2)  $S^*=S^{-1}$ if and only if $\xi=0$ is straightforward. Notice that if $\xi=0$, then $A$ reduces to $I$ on $H$.

(3) follows from Proposition 3 for $r=1$.

 (4)  $S$ has two fixed points $\left\{\pm \dfrac{\xi}{\|\xi\|}\right\}$ lying on the unit sphere. Also $S$ can't be elliptic as it does not have an eigenvector of the form $(x,1),\, \|x\|<1$, for eigenvectors of S either come from $\left<\xi\right> \oplus \mathbb{C}$ or $(\left<\xi\right> \oplus \mathbb{C})^{\perp}$ and $a \pm ||\xi|| \neq 1$.
  \qed \end{proof}
 Next we explore the  self adjoint elements of $G$. Prior to that, let us make a simple observation. In the construction of $T$, $A=I$ on $\left<\xi\right>^{\perp}$. This implies that every unitary operator $U$ which decomposes $H$ into $\left<\xi\right>$ and $\left<\xi\right>^{\perp}$  commutes with $A$. 
\begin{proposition}[Self adjoint elements of $G$]
Let $T$ be a self adjoint element in $G$. Then
\begin{enumerate}
\item $T$ is of the form $\pm\left[{\begin{array}{cc}
UA & \xi\\
\left<\cdot,\xi\right> & a\\
\end{array}}\right]$, where $U$ is a unitary  and involutory operator.
\item  $\sigma(T)=\{a \pm \|\xi\|\} \cup \{\pm 1\}$.
\end{enumerate}
\end{proposition}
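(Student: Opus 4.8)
The plan is to fix for $T$ the normalized representation $T=e^{i\theta}\bigl[\begin{smallmatrix} UA & U(\xi)\\ \langle\cdot,\xi\rangle & a\end{smallmatrix}\bigr]$ provided by Proposition~1 (so $A$ is positive with $A(\xi)=a\xi$ and $A=I$ on $\langle\xi\rangle^{\perp}$, $U$ is unitary and $a=\sqrt{1+\|\xi\|^{2}}$), to read off $T^{*}$ from the formula for the adjoint recorded just after Proposition~1, and to impose $T=T^{*}$ block by block. Comparing the $(2,2)$-entries gives $e^{i\theta}a=e^{-i\theta}a$, and since $a$ is a positive real this forces $e^{2i\theta}=1$, i.e.\ $e^{i\theta}=\pm1$. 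Comparing the $(1,2)$-entries (vectors of $H$) then gives $e^{i\theta}U(\xi)=e^{-i\theta}\xi$, so $U(\xi)=\xi$; the $(2,1)$-entries reproduce the same identity. Finally the $(1,1)$-entries give $e^{i\theta}UA=e^{-i\theta}(UA)^{*}=e^{-i\theta}AU^{*}$ (using $A^{*}=A$), hence $UA=AU^{*}$, equivalently $UAU=A$.

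The next step is to squeeze the involutory property of $U$ out of $UAU=A$ together with the rigidity of $A$. Because $U(\xi)=\xi$, the unitary $U$ fixes $\langle\xi\rangle$ pointwise and leaves $\langle\xi\rangle^{\perp}$ invariant; restricting $UAU=A$ to $\langle\xi\rangle^{\perp}$, where $A$ is the identity, collapses the relation to $U^{2}=I$ on $\langle\xi\rangle^{\perp}$, while on $\langle\xi\rangle$ the identity $U^{2}=I$ is trivial. Hence $U^{2}=I$ on all of $H$, and together with $e^{i\theta}=\pm1$ and $U(\xi)=\xi$ this yields exactly $T=\pm\bigl[\begin{smallmatrix} UA & \xi\\ \langle\cdot,\xi\rangle & a\end{smallmatrix}\bigr]$ with $U$ unitary and involutory, which is part~(1). (The description is in fact sharp: if $U$ is a unitary involution with $U(\xi)=\xi$ then $UA=AU$, whence $(UA)^{*}=AU=UA$, and such an operator is self-adjoint.)

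For part~(2), the relation $U(\xi)=\xi$ is precisely the hypothesis of Proposition~2 and Proposition~3 with $r=1$. Thus $\bigl[\begin{smallmatrix} UA & \xi\\ \langle\cdot,\xi\rangle & a\end{smallmatrix}\bigr]=T_{1}\oplus U\!\restriction_{\langle\xi\rangle^{\perp}}$, and the eigenvalue formula of Proposition~3 specializes for $r=1$ to $\lambda_{1,2}=\tfrac{2a\pm\sqrt{4a^{2}-4}}{2}=a\pm\|\xi\|$, using $a^{2}-1=\|\xi\|^{2}$. These two numbers are positive ($a>\|\xi\|\ge0$), of modulus different from $1$ ($a>1$ because $\xi\neq0$), and reciprocal, since $(a+\|\xi\|)(a-\|\xi\|)=a^{2}-\|\xi\|^{2}=1$. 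On the orthogonal complement $U\!\restriction_{\langle\xi\rangle^{\perp}}$ is again a unitary involution, so its spectrum lies in $\{1,-1\}$; incorporating the overall sign from part~(1) (which leaves $\{1,-1\}$ invariant) gives $\sigma(T)=\{a\pm\|\xi\|\}\cup\{\pm1\}$.

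The only point I expect to require genuine care is the step $UAU=A\Rightarrow U^{2}=I$: it uses in an essential way that $A$ equals the identity on $\langle\xi\rangle^{\perp}$ — which is exactly what the construction behind Proposition~1 guarantees — so that the restricted relation is really $U^{2}=I$ and not merely a commutation. The remainder is block-matrix bookkeeping, where the one thing to watch is carrying the scalar $e^{i\theta}=\pm1$ consistently through each comparison.
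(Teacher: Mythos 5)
Your proof is correct and follows essentially the same route as the paper: compare $T$ with $T^{*}$ entrywise to obtain $\theta=n\pi$, $U(\xi)=\xi$ and $UA$ self-adjoint, exploit the fact that $A=I$ on $\langle\xi\rangle^{\perp}$ to force $U$ to be involutory, and invoke Proposition~3 with $r=1$ for the spectrum. The only (immaterial) difference is the micro-step for the involution: the paper notes that $U$ commutes with $A$, so $UA$ self-adjoint gives $U=U^{*}$ and hence $U^{2}=I$, whereas you derive $UAU=A$ and restrict it to $\langle\xi\rangle^{\perp}$ --- two equivalent uses of the same rigidity of $A$.
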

 \begin{proof}
(1)  $T \in G$ is self adjoint if and only if $T=T^*$, i.e  $e^{i \theta}\left[{\begin{array}{cc}
UA & U(\xi)\\
\left<\cdot,\xi\right> & a\\
\end{array}}\right]=e^{-i\theta}\left[{\begin{array}{cc}
(UA)^* & \xi\\
\left<\cdot,U(\xi)\right> & a\\
\end{array}}\right]$ which gives $UA$ is self adjoint, $U(\xi)=\xi$ and $\theta=n \pi$, $n \in \mathbb{Z}$. Now, $UA$ is self adjoint, $A$ is self adjoint and $U$ commutes with $A$ (by the above observation) gives $U$ is self adjoint. Hence $U$ is involutory.
 
(2) Clearly, $T=T_1 \oplus T_2,\,  \sigma(T_2)= \sigma(U \restriction_{\left<\xi\right>^{\perp}})=\{\pm 1\}$.
 \qed \end{proof}
 
 Let us now discuss the form of involutory elements of $G$. It is easy to see that unitary and involutory elements of $G$ are of the form $\left[{\begin{array}{cc}
 V & 0\\
 0 & \pm 1\\
 \end{array}}\right]$ where $V$ is unitary and involutory.
 
 \begin{proposition}[Involutory elements of $G$]
Let $T$ be an involutory element in $G$. Then
 \begin{enumerate}
     \item $T=\pm \left[ {\begin{array}{cc}
   UA & -\xi \\
   \left<\cdot,\xi\right> & a \\
  \end{array} } \right]$ where $U$ is involutory.
  \item $T=T_1 \oplus T_2$ where $T_1=T\restriction_{\left<\xi\right> \oplus \mathbb{C}}$ and $T_2=U\restriction_{{\left<\xi\right>}^{\perp}}$.
  \item $\sigma(T)=\{\pm1\}$, where $\sigma(T_1)=\pm 1$ with eigenspace spanned by\\ $\left(\dfrac{-a \pm 1}{\|\xi\|^2}\,\xi\,,1\right)$.
  \item Involutory elements are elliptic in nature. 
 \end{enumerate}
 \end{proposition}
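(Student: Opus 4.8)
The plan is to exploit the involution condition $T^{2}=I$ in the equivalent form $T=T^{-1}$, using the block expression for $T^{-1}$ recorded just after the simplified form of an element of $G$. Writing $T=e^{i\theta}\left[{\begin{array}{cc}UA & U(\xi)\\ \langle\cdot,\xi\rangle & a\end{array}}\right]$, the identity $T=T^{-1}$ becomes an entrywise equality of $2\times2$ operator matrices. Comparing the scalar $(2,2)$-entries gives $e^{i\theta}a=e^{-i\theta}a$, hence $e^{2i\theta}=1$ and $e^{i\theta}=\pm1$; since $e^{i\theta}\in\{\pm1\}$ is real we have $e^{i\theta}=e^{-i\theta}$, so the $(1,2)$-entries $e^{i\theta}U(\xi)=-e^{-i\theta}\xi$ collapse to $U(\xi)=-\xi$, which is exactly the matrix shape claimed in~(1); the $(2,1)$-entries are then automatically consistent, and the $(1,1)$-entries yield $UA=(UA)^{*}$, i.e.\ $UA$ is self-adjoint.

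To complete~(1) I would show that $U$ is involutory. Since $U(\xi)=-\xi$, the line $\langle\xi\rangle$ is $U$-invariant, and hence so is $\langle\xi\rangle^{\perp}$ because $U$ is unitary; by the observation recorded just before the description of the self-adjoint elements, such a $U$ commutes with $A$, and therefore $A$ also commutes with $U^{*}$. From $UA=(UA)^{*}=A^{*}U^{*}=AU^{*}$ together with $UA=AU$ we obtain $AU=AU^{*}$. Since $A$ is positive with $\sigma(A)=\{1,a\}\subset(0,\infty)$ (being $I$ on $\langle\xi\rangle^{\perp}$ and multiplication by $a=\sqrt{1+\|\xi\|^{2}}$ on $\langle\xi\rangle$), it is invertible, so $U=U^{*}$; combined with $U^{*}U=I$ this yields $U^{2}=I$.

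Parts~(2) and~(3) are then direct specializations of the earlier structure theorems to $r=-1$. Indeed $U(\xi)=-\xi$ means $U(\xi)=r\xi$ with $r=-1$, $|r|=1$, so the reducing-subspace criterion (Proposition~2) applies and gives $T=T_{1}\oplus T_{2}$ with $T_{1}=T\restriction_{\langle\xi\rangle\oplus\mathbb{C}}$ and $T_{2}=U\restriction_{\langle\xi\rangle^{\perp}}$, which is~(2). Substituting $r=-1$ into the spectral computation (Proposition~3), and noting $a(r+1)=0$ and $a^{2}(r+1)^{2}-4r=4$, the eigenvalues of $T_{1}$ are $\lambda_{1,2}=\pm1$ and the associated eigenvectors are $(k_{1,2}\xi,1)$ with $k_{1,2}=\dfrac{-2a\pm2}{2\|\xi\|^{2}}=\dfrac{-a\pm1}{\|\xi\|^{2}}$; moreover $\sigma(T_{2})=\sigma(U\restriction_{\langle\xi\rangle^{\perp}})\subseteq\{\pm1\}$ since $U$ is involutory. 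Hence $\sigma(T)=\{\pm1\}$, a conclusion unaffected by the overall scalar $\pm1$ from~(1), which only interchanges the $+1$- and $-1$-eigenspaces; this proves~(3).

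For~(4) I would invoke the correspondence noted earlier: $x\in\overline{B}$ is fixed by $\widetilde{T}$ iff $(x,1)$ is an eigenvector of $T$, and $\widetilde{T}$ is elliptic iff it fixes a point of the open ball $B$. Taking the eigenvector $\big(\tfrac{1-a}{\|\xi\|^{2}}\xi,\,1\big)$ from~(3), its first coordinate $x=\tfrac{1-a}{\|\xi\|^{2}}\xi$ satisfies $\|x\|=\tfrac{a-1}{\|\xi\|}$, and $\tfrac{a-1}{\|\xi\|}<1$ is equivalent to $a<1+\|\xi\|$, i.e.\ to $a^{2}=1+\|\xi\|^{2}<1+2\|\xi\|+\|\xi\|^{2}$, which holds since $\xi\neq0$. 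Thus $x\in B$ is a fixed point of $\widetilde{T}$, so $T$ is elliptic. The only step that is not a routine computation or a direct quotation of a previous result is the deduction $U=U^{*}$ in~(1): the points to watch are that $U$ commutes with $A$ (so that self-adjointness of $UA$ transfers to $U$) and that $A$ is invertible; once these are secured, the rest is bookkeeping with $r=-1$ plus one elementary inequality.
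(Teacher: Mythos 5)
Your proposal is correct and follows essentially the same route as the paper: comparing $T$ with $T^{-1}$ entrywise to get $U(\xi)=-\xi$, $UA$ self-adjoint and $\theta=n\pi$, transferring self-adjointness from $UA$ to $U$ via the commutation with the (invertible, positive) $A$, specializing Proposition 3 to $r=-1$ for parts (2) and (3), and checking $\frac{a-1}{\|\xi\|}<1$ for ellipticity. Your write-up merely supplies a little more detail (explicit invertibility of $A$ and the verification of the inequality) than the paper does.
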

 
 \begin{proof}
 (1) $T$ is involutory if and only if $T=T^{-1}$. On comparing respective entries of $T$ and $T^{-1}$, we get $UA$ is self adjoint,  $U(\xi)=-\xi$ and $\theta =n \pi$. Following the same argument as in part (1) of the previous proposition, we get $U$ is involutory.
 
(2) and (3) follow from Proposition 3 for $r=-1$.

(4) Since $\dfrac{|-a+1|}{\|\xi\|}=\dfrac{a-1}{\|\xi\|}<1$, $T$ is elliptic in nature.
 \qed \end{proof}

 \section{Unitarily equivalence condition and cardinality}
Next we will learn about the condition on an isometry to be unitarily equivalent to its inverse.

\begin{proposition}
An isometry $T=e^{i\theta}\left[{\begin{array}{cc}
UA & U(\xi)\\
\left<\cdot,\xi\right> & a\\
\end{array}}\right]$ and its inverse are unitarily equivalent if and only if there exists a unitary operator $V$ such that $(UA)^*=V^{-1}UAV$, $V(\xi)=V^{-1}(\xi)=-U(\xi)$ and $\theta=n\pi$.
\end{proposition}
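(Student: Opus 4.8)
The statement is an equivalence, and in each direction it reduces to writing $T^{-1}$ in two ways as a $2\times 2$ operator matrix over $H\oplus\mathbb{C}$ and matching entries. One form is the one already recorded above, $T^{-1}=e^{-i\theta}\left[\begin{smallmatrix}(UA)^{*} & -\xi\\ -\langle\cdot,U(\xi)\rangle & a\end{smallmatrix}\right]$. The other is $WTW^{-1}$, where $W$ is the unitary implementing the equivalence; since the three conditions in the statement involve only operators and vectors on $H$, the plan is to take $W$ of the block form $\left[\begin{smallmatrix}V & 0\\ 0 & c\end{smallmatrix}\right]$ with $V$ unitary on $H$ and $|c|=1$, i.e.\ a unitary respecting the splitting $H\oplus\mathbb{C}$ (these are, up to the central phase $Z(G)$, exactly the unitary elements of $G$; cf.\ Theorem 3.6(2)). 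A single blockwise multiplication, using $\langle V^{-1}x,\xi\rangle=\langle x,V\xi\rangle$ and $c\overline{c}=1$, gives $WTW^{-1}=e^{i\theta}\left[\begin{smallmatrix} VUAV^{-1} & \overline{c}\,VU(\xi)\\ c\,\langle\cdot,V\xi\rangle & a\end{smallmatrix}\right]$, and both implications are then read off by comparing this with the displayed form of $T^{-1}$.

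For sufficiency: given a unitary $V$ with $(UA)^{*}=V^{-1}UAV$, $V(\xi)=V^{-1}(\xi)=-U(\xi)$ and $\theta=n\pi$, take $W=\left[\begin{smallmatrix}V^{-1} & 0\\ 0 & 1\end{smallmatrix}\right]$. The $(2,2)$ entries agree because $\theta=n\pi$ gives $e^{i\theta}=e^{-i\theta}$; the $(1,1)$ entries agree because $(UA)^{*}=V^{-1}UAV$; the $(2,1)$ entries agree because $V^{-1}(\xi)=-U(\xi)$; and the $(1,2)$ entries agree because $V^{-1}U(\xi)=-\xi$, which is $V(\xi)=-U(\xi)$ with $V^{-1}$ applied. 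Hence $WTW^{-1}=T^{-1}$.

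For necessity: start from a unitary $W=\left[\begin{smallmatrix}V_{0} & 0\\ 0 & c\end{smallmatrix}\right]$ with $WTW^{-1}=T^{-1}$ and equate entries. The $(2,2)$ entry forces $e^{i\theta}a=e^{-i\theta}a$, hence $\theta=n\pi$ (here $a>0$ is used); the $(1,1)$ entry forces $V_{0}UAV_{0}^{-1}=(UA)^{*}$; the $(1,2)$ and $(2,1)$ entries force $V_{0}U(\xi)=-c\,\xi$ and $V_{0}(\xi)=-c\,U(\xi)$. Then set $V:=\overline{c}\,V_{0}$, again unitary: from $V_{0}(\xi)=-c\,U(\xi)$ one gets $V(\xi)=-U(\xi)$; from $V_{0}U(\xi)=-c\,\xi$ one gets $V^{-1}(\xi)=-U(\xi)$; and taking the adjoint of $V_{0}UAV_{0}^{-1}=(UA)^{*}$ (and using that $V_{0}$ is unitary) gives $V_{0}^{-1}UAV_{0}=(UA)^{*}$, whence $V^{-1}UAV=(UA)^{*}$. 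So $V$ has the three stated properties.

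The computations are routine; the real care is in the bookkeeping — keeping $e^{i\theta}$ and $e^{-i\theta}$ apart, using the inverse and adjoint block formulas for $T$ recorded just above the statement, and above all not confusing $V$ with $V^{-1}$ while correctly absorbing the free phase $c$ so that the four entry identities match exactly the three listed conditions. The one non-obvious step I would isolate as a small sublemma is that, for unitary $V_{0}$, the relations $V_{0}UAV_{0}^{-1}=(UA)^{*}$ and $V_{0}^{-1}UAV_{0}=(UA)^{*}$ are equivalent (each is the adjoint of the other); the $(1,1)$-entry comparison produces the former, while the statement is phrased with the latter. If one wanted the equivalence implemented by an arbitrary unitary of $H\oplus\mathbb{C}$, the step I would expect to be genuinely delicate is showing that such a $W$ may be reduced to the above block form; everything after that reduction is the mechanical comparison just described.
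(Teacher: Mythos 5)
Your proof is correct and follows essentially the same route as the paper's: conjugate $T$ by a block-diagonal unitary of $H\oplus\mathbb{C}$, multiply out the blocks, and compare the four entries with the displayed formula for $T^{-1}$. The only differences are cosmetic --- you allow a phase $c$ in the scalar slot and conjugate in the opposite order (hence need the small remark that $V_0UAV_0^{-1}=(UA)^*$ and $V_0^{-1}UAV_0=(UA)^*$ are adjoints of each other), whereas the paper takes the conjugating operator to be $V$ on $H$ and $1$ on $\mathbb{C}$ from the outset and reads off the three conditions in one step; the reduction of an arbitrary unitary to this block form, which you rightly flag as the only genuinely delicate point, is left implicit in the paper as well.
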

\begin{proof}
An isometry $T$ and its inverse are unitarily equivalent to each other if and only if there exists a unitary operator $V$ satisfying 
\[\left[{\begin{array}{cc}
V^{-1} & 0\\
0 & 1\\
\end{array}}\right] \left\{e^{i\theta}\left[{\begin{array}{cc}
UA & U(\xi)\\
\left<\cdot,\xi\right> & a\\
\end{array}}\right]\right\}\left[{\begin{array}{cc}
V & 0\\
0 & 1\\
\end{array}}\right]=e^{-i \theta}\left[{\begin{array}{cc}
(UA)^* & -\xi\\
-\left<\cdot,U(\xi)\right> & a\\
\end{array}}\right]\]
i.e.
\[e^{i \theta}\left[{\begin{array}{cc}
V^{-1}UAV & V^{-1}U(\xi)\\
\left<\cdot,V^{-1}(\xi)\right> & a\\
\end{array}}\right]=e^{-i\theta}\left[{\begin{array}{cc}
(UA)^* & -\xi\\
-\left<\cdot,U(\xi)\right> & a\\
\end{array}}\right]\]
if and only if $(UA)^*=V^{-1}UAV$, $V(\xi)=V^{-1}(\xi)=-U(\xi)$ and $\theta=n\pi$.
\qed \end{proof}
Example of an operator which is unitarily equivalent to its inverse is $\left[{\begin{array}{cc}
A & \xi\\
\left<\cdot,\xi\right> & a\\
\end{array}}\right]$ where the conjugating operator $V$ can be taken $-I$. Notice that the way $A$ is defined, $A$ clearly commutes with $V$.
\vspace{5mm}

In finite dimensions, the number of conjugacy classes of centralizers (called $z$-classes) in a group of isometries uses to have implications on the dynamical types of the action of the group on the underlying space, see  [\cite{SA}, \cite{KG}, \cite{KU}]. In infinite dimensional setup, this number should better be represented by cardinality of the set of $z$-classes. Computation of the size (in terms of cardinality) of the set of conjugacy classes happens to be the initial attempt towards the computation of number of $z$-classes. In what follows, we use the spectral decomposition of normal operators to find cardinality of the set of conjugacy classes of normal operators.\\

\textbf{In what follows, we will assume $H$ to be a separable Hilbert space.}\\

 We know that $|H|=\mathfrak{c}$. Let $U(H)$ denote the group of unitary operators on $H$ and $G_s$ denote the collection of self adjoint isometries in $G$. Then 
 
 \begin{proposition}
  $|U(H)|=|G|=|G_s|=\mathfrak{c}$. 
  \end{proposition}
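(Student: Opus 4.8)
The plan is to prove the chain of equalities $|U(H)| = |G| = |G_s| = \mathfrak{c}$ by establishing $\mathfrak{c}$ as both a lower and an upper bound for each set, exploiting the structural decompositions already available in the excerpt.

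First I would handle the lower bound, which is the easy direction. Each of the three sets contains a copy of $\mathfrak{c}$: the scalars $\{e^{i\theta}I : \theta \in \mathbb{R}\} = Z(G)$ sit inside $U(H)$ and inside $G$, giving $|U(H)| \ge \mathfrak{c}$ and $|G| \ge \mathfrak{c}$; for $G_s$, Proposition~7 describes the self-adjoint elements as $\pm\left[{\begin{smallmatrix} UA & \xi \\ \langle\cdot,\xi\rangle & a\end{smallmatrix}}\right]$, and letting $\xi$ range over a one-real-parameter family of nonzero vectors (with $U$ fixed, say $U=I$ on $\langle\xi\rangle$, which forces $A$ and everything else) already produces continuum-many distinct self-adjoint isometries, so $|G_s| \ge \mathfrak{c}$.

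Next I would establish the upper bound $|U(H)| \le \mathfrak{c}$. Since $H$ is separable, fix a countable orthonormal basis $\{e_n\}$; a bounded operator is determined by its matrix $(\langle Ue_m, e_n\rangle)_{m,n}$, i.e. by a function $\mathbb{N}\times\mathbb{N} \to \mathbb{C}$, so $|B(H)| \le \mathfrak{c}^{\aleph_0} = \mathfrak{c}$, hence $|U(H)| \le |B(H)| \le \mathfrak{c}$. Combined with the lower bound this gives $|U(H)| = \mathfrak{c}$. The same argument gives $|B(H\oplus\mathbb{C})| \le \mathfrak{c}$, and since $G \subseteq B(H\oplus\mathbb{C})$ and $G_s \subseteq G$, we immediately get $|G| \le \mathfrak{c}$ and $|G_s| \le \mathfrak{c}$. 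Together with the lower bounds, all three cardinalities equal $\mathfrak{c}$.

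I do not expect any serious obstacle here; the only point requiring a little care is making sure the lower bound for $G_s$ genuinely yields distinct elements — one must check that varying $\xi$ over, say, $\{t v_0 : 0 < t < 1\}$ for a fixed unit vector $v_0$ changes the operator (it changes the $(1,2)$-entry $\xi$, so the operators are visibly different). Alternatively, and perhaps more cleanly, one can simply note $G_s \supseteq \{\pm I\}$-translates are not enough, so the $\xi$-family is the honest route. A completely painless alternative for the $G_s$ lower bound is to invoke part of Proposition~6 or the normal elements of Theorem~3: the positive (normal, self-adjoint) isometries $\left[{\begin{smallmatrix} A & \xi \\ \langle\cdot,\xi\rangle & a\end{smallmatrix}}\right]$ with $\xi = t v_0$ form a continuum, pinning down $|G_s| \ge \mathfrak{c}$. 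Everything else is the standard "$\mathfrak{c}^{\aleph_0} = \mathfrak{c}$" counting on a separable space.
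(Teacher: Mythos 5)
Your argument is correct and is essentially the paper's own: both obtain the lower bounds from the scalars $e^{i\theta}I$ (for $U(H)$, $G$) and from a continuum-sized family of self-adjoint elements parametrized by $\xi$ (for $G_s$), and both get the upper bounds by representing operators on the separable space as infinite matrices, i.e.\ the count $\mathfrak{c}^{\aleph_0}=\mathfrak{c}$. The only cosmetic difference is that you bound $|G|$ via the inclusion $G\subseteq B(H\oplus\mathbb{C})$ while the paper reads it off from the parametrization $|G|=|H|\times|U(H)|\times|\mathbb{R}|$; the substance is identical.
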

  \begin{proof}
  As  $e^{i \theta}I \in U(H)$, $\theta \in \mathbb{R}$,   we have $|U(H)| \geq \mathfrak{c}$. Also, $|U(H)| \leq |L(H)|$, where $L(H)$ is the space of linear operators on $H$. Now, each linear operator has an infinite matrix representation. This gives $|L(H)|$ is same as the cardinality of the set of all complex sequences which is same as $\mathfrak{c}$. Hence $|U(H)|=\mathfrak{c}$.
  
  Also, by the construction of elements of $G$, it can be seen that $|G|=|H|\times |U(H)| \times |\mathbb{R}|=\mathfrak{c} \times \mathfrak{c} \times \mathfrak{c}=\mathfrak{c}$.
  
  For the collection $G_s$, elements of the form $\left[{\begin{array}{cc}
  A & \xi\\
  \left<\cdot,\xi\right> & a\\
  \end{array}}\right]$ where $\xi \in H$ lie in $G_s$ and cardinality of such elements is $|H|=\mathfrak{c}$. This gives $|G_s| \geq \mathfrak{c}$. Also, $|G_s| < |G|=\mathfrak{c}$. So $|G_s|=\mathfrak{c}$.
  \qed \end{proof}
  
  Next we will find out the cardinality of the set of unitary equivalence classes of normal operators on $H$. For this, we will use the following result from Conway \cite[Chapter 9, Corollary 10.12]{JBC}.
  \begin{theorem}
  If $N$ is a normal operator on a separable Hilbert space $H$ with scalar-valued spectral measure $\mu$, then there is a decreasing sequence $\{\Delta_n\}$ of Borel subsets of $\sigma(N)$ such that $\Delta_1=\sigma(N)$ and 
  \[N \cong N_{\mu} \oplus N_{\mu\restriction_{\Delta_2}} \oplus N_{\mu\restriction_{\Delta_3}} \oplus ...\, .\]
 \end{theorem}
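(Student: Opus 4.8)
The plan is to reduce the statement to the standard multiplicity (Hahn--Hellinger) theory of normal operators: decompose $N$ into cyclic summands, rearrange these summands into a chain linearly ordered by absolute continuity, and then identify each summand with the restriction of $N_\mu$ to a Borel subset of $\sigma(N)$.

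First I would use the cyclic decomposition theorem: since $H$ is separable, $H=\bigoplus_n H_n$ with at most countably many summands, each $H_n$ reducing $N$ and each $N\restriction_{H_n}$ cyclic, hence unitarily equivalent to $N_{\nu_n}$, multiplication by the coordinate function on $L^2(\nu_n)$, for a compactly supported Borel measure $\nu_n$ carried by $\sigma(N\restriction_{H_n})\subseteq\sigma(N)$. Thus $N\cong\bigoplus_n N_{\nu_n}$, and by definition the scalar-valued spectral measure $\mu$ is a representative of the supremum of the measure classes $[\nu_n]$, so $\mu$ is carried by $\sigma(N)$ with $\mathrm{supp}\,\mu=\sigma(N)$. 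I would then record the two elementary facts that do the work: (i) if $\nu\ll\lambda$ then $N_\nu\cong N_{\lambda\restriction_E}$, where $E=\{d\nu/d\lambda>0\}$ (in particular $N_\nu\cong N_{\nu'}$ exactly when $\nu$ and $\nu'$ are mutually absolutely continuous); and (ii) the absorption identity $N_\nu\oplus N_{\nu'}\cong N_{\nu+\nu'}\oplus N_{\nu\wedge\nu'}$, where $\nu\wedge\nu'$ denotes any measure whose class is the infimum of $[\nu]$ and $[\nu']$. Fact (ii) is checked by transporting $L^2(\nu)\oplus L^2(\nu')$ into $L^2$ of $m:=\nu+\nu'$ via $h\mapsto h\sqrt{d\nu/dm}$, splitting the underlying set according to where the Radon--Nikodym derivatives $f=d\nu/dm$ and $g=d\nu'/dm$ vanish, and reassembling: the pieces regroup into $N_m$ together with one extra copy of $N$ living on $\{f>0\}\cap\{g>0\}$, a set carrying exactly the infimum class.

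Applying (ii) repeatedly to absorb $\nu_2,\nu_3,\dots$ into a growing top measure, together with a limiting argument that replaces the finite running sums $\nu_1+\dots+\nu_k$ by their total-variation limit, rewrites $\bigoplus_n N_{\nu_n}$ as $\bigoplus_n N_{\lambda_n}$ with $\lambda_1\gg\lambda_2\gg\cdots$ and $[\lambda_1]=[\mu]$. Finally I would replace $\lambda_1$ by $\mu$ itself and, for $n\ge 2$, apply (i) to write $N_{\lambda_n}\cong N_{\mu\restriction_{\Delta_n}}$ with $\Delta_n=\{d\lambda_n/d\mu>0\}$; the chain $\lambda_1\gg\lambda_2\gg\cdots$ forces $\Delta_2\supseteq\Delta_3\supseteq\cdots$ modulo $\mu$-null sets, so after adjusting on null sets and setting $\Delta_1=\sigma(N)$ one obtains a genuinely decreasing sequence of Borel subsets of $\sigma(N)$ with $\Delta_1=\sigma(N)$ and $N\cong N_\mu\oplus N_{\mu\restriction_{\Delta_2}}\oplus N_{\mu\restriction_{\Delta_3}}\oplus\cdots$, using that $N_{\mu\restriction_{\Delta_1}}=N_\mu$ because $\mu$ is carried by $\sigma(N)$; if the multiplicity is bounded the $\Delta_n$ are eventually empty, which the statement permits. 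The main obstacle is the rearrangement step: pushing the finitary absorption identity (ii) through an infinite direct sum requires a genuine convergence argument, and this is exactly the point where the Borel multiplicity function $m(x)=\#\{n:x\in\Delta_n\}$ gets constructed; once the chain is in hand, the rest is routine bookkeeping with Radon--Nikodym derivatives.
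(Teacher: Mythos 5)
The paper does not actually prove this statement: it is quoted verbatim from Conway \cite{JBC} (Corollary IX.10.12) and used only as a black box to bound $|\mathcal{N}|$, so there is no in-paper argument to compare yours against. On its own terms, your outline is the standard Hahn--Hellinger route, and it is essentially the proof Conway gives: cyclic decomposition $N\cong\bigoplus_n N_{\nu_n}$, the identification $N_\nu\cong N_{\lambda\restriction_E}$ for $\nu\ll\lambda$ with $E=\{d\nu/d\lambda>0\}$, and a rearrangement into a chain of measures decreasing under absolute continuity and topped by the scalar spectral measure. Your facts (i) and (ii) are both correct as stated, and the verification you sketch for (ii) works.

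The one real gap is exactly the step you flag yourself: the passage from $\bigoplus_n N_{\nu_n}$ to $\bigoplus_n N_{\lambda_n}$ with $\lambda_1\gg\lambda_2\gg\cdots$. Iterating the absorption identity produces at stage $k$ a top measure plus $k-1$ remainders that must themselves be re-absorbed, and ``a limiting argument'' cannot be waved at here --- this rearrangement \emph{is} the content of the theorem. The clean way to close it is to bypass iterated absorption entirely: normalize so each $\nu_n$ has total mass at most $2^{-n}$, put $\mu=\sum_n\nu_n$, set $E_n=\{d\nu_n/d\mu>0\}$, define $m(z)=\#\{n:z\in E_n\}$ and $\Delta_k=\{z:m(z)\ge k\}$. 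The $\Delta_k$ decrease and $\Delta_1$ has full $\mu$-measure, so after applying your fact (i) summand by summand it suffices to show $\bigoplus_n N_{\mu\restriction_{E_n}}\cong\bigoplus_k N_{\mu\restriction_{\Delta_k}}$. Both sides are multiplication by $z$ on an $L^2$ space over $\{(z,n):z\in E_n\}$, resp.\ $\{(z,k):z\in\Delta_k\}$, with respect to $\mu$ times counting measure; for $\mu$-a.e.\ $z$ the two fibers have the same cardinality $m(z)$, and enumerating $\{n:z\in E_n\}$ in increasing order gives a measurable family of fiberwise bijections, hence a unitary intertwining the two operators. This replaces your convergence argument by bookkeeping and completes the proof; with that substitution your proposal is correct.
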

  Let us introduce some quick notations for the sake of next proposition.
  
   Let the set of all normal operators be denoted by $\mathcal{N}$, set of all unitary equivalence classes of normal operators by $\mathcal{N_{U}}$, set of all compact subsets of $\mathbb{C}$ by $\mathcal{K}$, Borel $\sigma$-algebra of a set $A \subset \mathbb{C}$ by $\mathcal{B}(A)$ and the set of all sequences of all the Borel subsets of a set $A$ by $\mathcal{B}_s(A)$.
   
   The above theorem gives $ |\mathcal{N}| \leq |\underset{K \in \mathcal{K}}{\bigcup} \mathcal{B}_{s}(K)| $.
  \begin{proposition}
  $|\mathcal{N_{U}}|=\mathfrak{c}$.
  \end{proposition}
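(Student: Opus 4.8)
The plan is to establish the bound $\mathfrak{c}$ from both sides. For the lower bound $|\mathcal{N_U}| \geq \mathfrak{c}$, I would exhibit a $\mathfrak{c}$-sized family of pairwise non-unitarily-equivalent normal operators; the simplest choice is the family of scalar operators $\{\lambda I : \lambda \in \mathbb{C}\}$, or alternatively $\{\lambda I : \lambda \in [0,1]\}$, each of which is normal, and two such are unitarily equivalent only if they are equal (since unitary conjugation fixes scalars). Since $|\mathbb{C}| = \mathfrak{c}$, this gives $|\mathcal{N_U}| \geq \mathfrak{c}$ immediately.

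For the upper bound, I would use the inequality already recorded in the excerpt, namely $|\mathcal{N}| \leq \big|\bigcup_{K \in \mathcal{K}} \mathcal{B}_s(K)\big|$, which comes from Theorem 4.2 (Conway): every normal operator $N$ on a separable $H$ is determined up to unitary equivalence by its compact spectrum $K = \sigma(N)$ together with a decreasing sequence of Borel subsets of $K$ (encoding the multiplicity function via the scalar spectral measure), so $|\mathcal{N_U}| \leq |\mathcal{N}| \leq \big|\bigcup_{K \in \mathcal{K}} \mathcal{B}_s(K)\big|$. It then remains to bound this union. First, $|\mathcal{K}| \leq \mathfrak{c}$: compact (indeed closed) subsets of $\mathbb{C}$ are in bijection with a subset of the closed subsets of $\mathbb{C}$, and since $\mathbb{C}$ is second countable, every closed set is a countable intersection of members of a fixed countable base-complement family — more simply, the number of closed subsets of a separable metric space is at most $\mathfrak{c}$. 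Second, for a fixed compact $K \subseteq \mathbb{C}$, the Borel $\sigma$-algebra $\mathcal{B}(K)$ has cardinality at most $\mathfrak{c}$ (the Borel hierarchy on a second-countable space has $\mathfrak{c}$ sets), so the set of sequences $\mathcal{B}_s(K) = \mathcal{B}(K)^{\mathbb{N}}$ has cardinality at most $\mathfrak{c}^{\aleph_0} = \mathfrak{c}$. Therefore $\big|\bigcup_{K \in \mathcal{K}} \mathcal{B}_s(K)\big| \leq \mathfrak{c} \cdot \mathfrak{c} = \mathfrak{c}$, giving $|\mathcal{N_U}| \leq \mathfrak{c}$. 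Combining the two bounds yields $|\mathcal{N_U}| = \mathfrak{c}$.

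The only genuinely delicate points are the two cardinality facts about Borel structure: that a separable metric space has at most $\mathfrak{c}$ closed subsets, and that its Borel $\sigma$-algebra has cardinality at most $\mathfrak{c}$. Both are standard — the first because a closed set is the closure of any countable dense subset of itself, so closed sets inject into $(\text{countable subsets of a countable dense set of } \mathbb{C})$, a set of size $\aleph_0^{\aleph_0} = \mathfrak{c}$; the second by transfinite induction on the Borel hierarchy, using that there are only $\aleph_1 \leq \mathfrak{c}$ levels and each level adds at most $\mathfrak{c}^{\aleph_0} = \mathfrak{c}$ new sets. I would cite these rather than prove them in detail. The main conceptual step — that unitary equivalence classes of normal operators are parametrized by the spectrum-plus-multiplicity data — is handed to us by Theorem 4.2, so the proposition reduces to bookkeeping with cardinal arithmetic.
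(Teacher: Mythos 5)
Your proof is correct and follows essentially the same route as the paper: the upper bound is identical, resting on Conway's multiplicity theorem together with the cardinality estimates $|\mathcal{K}|\leq\mathfrak{c}$ and $|\mathcal{B}(K)^{\mathbb{N}}|\leq\mathfrak{c}^{\aleph_0}=\mathfrak{c}$. The only (cosmetic) difference is in the lower bound, where you use the scalar family $\{\lambda I:\lambda\in\mathbb{C}\}$ of pairwise non-equivalent normal operators, while the paper instead invokes the fact that every compact $K\subseteq\mathbb{C}$ arises as the spectrum of a normal operator and that the spectrum is a unitary invariant; your witness is slightly more economical since it requires no existence result.
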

  \begin{proof}
   Let $K \subset \mathbb{C}$ be a compact set. Then there exists a normal operator with $K$ as its spectrum. Also, unitarily equivalent normal operators have the same spectrum. This gives $|\mathcal{N_{U}}| \geq |\mathcal{K}|$. We know that $|\mathcal{B}(\mathbb{C})|=\mathfrak{c}$ (\cite[page no. 53]{WR}). 
   Now, every singleton is a compact set in $\mathbb{C}$ and every compact set is a Borel subset of $\mathbb{C}$ giving $|\mathcal{K}|    =\mathfrak{c}$ and hence $|\mathcal{N_{U}}|\geq \mathfrak{c}$.
   
  We will now show that $|\mathcal{N_{U}}| \leq \mathfrak{c}$. $|\mathcal{B}(K)| \leq |\mathcal{B}(\mathbb{C})|=\mathfrak{c}$ which gives $|\mathcal{B}_s(K)| \leq |\mathcal{B}_s(\mathbb{C})|=\mathfrak{c}$, therefore $|\underset{K \in \mathcal{K}}{\bigcup} \mathcal{B}_{s}(K)| \leq \mathfrak{c} \times \mathfrak{c}=\mathfrak{c}$. Finally, $|\mathcal{N_{U}}| \leq |\mathcal{N}|\leq |\underset{K \in \mathcal{K}}{\bigcup} \mathcal{B}_{s}(K)| \leq \mathfrak{c}$. Hence the result.
  \qed \end{proof}
\textbf{Acknowledgement}
The research is supported by Council of Scientic and Industrial Research, India (File no. 09/045(1668)/2019-EMR-I).

   \bibliographystyle{amsplain}

\end{document}